\spnewtheorem{Case}{Case}{\bfseries}{\itshape}
\newtheorem{subcase}{Subcase}
\author{Carlos A. Alfaro\inst{1}
\thanks	{Supported by SNI and CONACYT grant 166059.}
\and Alan Arroyo\inst{2}
\thanks{Suported by CONACYT.}
\and Marek Der\v n\' ar\inst{3}
\and Bojan Mohar\inst{4}
\thanks{Supported in part by an NSERC Discovery Grant (Canada), by the Canada Research Chairs program, and by a Research Grant of ARRS (Slovenia). %}~~\thanks{
On leave from: IMFM, Department of Mathematics, Ljubljana, Slovenia.}}
\institute{Banco de M\'exico, Ciudad de M\'exico, M\'exico
\and
Department of Combinatorics and Optimization,\\
University of Waterloo,  Canada
\and
Faculty of Informatics,
Masaryk University,
Brno, Czech Republic
\and
Department of Mathematics,
Simon Fraser University,
%8888 University Drive,
Burnaby, %BC ~V5A 1S6,
Canada
}
\author{Carlos A. Alfaro}
\address{
Banco de M\'exico\\
Legaria 691, Col. Irrigaci\'on\\
11500 M\'exico City, D.F., M\'exico
}
\thanks	{The author is supported by SNI and CONACYT grant 166059.}
\author{Alan Arroyo}
\address{
Department of Combinatorics and Optimization,\\
University of Waterloo,
Waterloo, Canada
}
\author{Marek Der\v n\' ar}
\address{
Faculty of Informatics,
Masaryk University,
Brno, Czech Republic
}
\author{Bojan Mohar}
\address{
Department of Mathematics,
Simon Fraser University,
%8888 University Drive,
Burnaby,BC ~V5A 1S6,
Canada
}
\thanks{Supported in part by an NSERC Discovery Grant (Canada), by the Canada Research Chairs program, and by a Research Grant of ARRS (Slovenia). %}~~\thanks{
On leave from: IMFM, Department of Mathematics, Ljubljana, Slovenia.}
\newcommand\CR{\mathop{cr}}
\date{\today}
\begin{document}

%===================================%
%===============Title================%
%===================================%

	\title{The crossing number of the cone of a graph}

	\maketitle

%===================================%
%==============Abstract==============%
%===================================%
\begin{abstract}
		Motivated by a problem asked by Richter and by the long standing Harary-Hill conjecture, we study the relation between the crossing number of a graph $G$ and the crossing number of its cone $CG$, the graph obtained from $G$ by adding a new vertex adjacent to all the vertices in $G$.
		Simple  examples show that the difference $cr(CG)-cr(G)$ can be arbitrarily large for any fixed $k=cr(G)$.
		In this work, we are interested in finding the smallest possible difference, that is, for each non-negative integer $k$, find the smallest $f(k)$ for which there exists a graph with crossing number at least $k$ and cone with crossing number $f(k)$.
		For small values of $k$, we give exact values of $f(k)$ when the problem is restricted to simple graphs, and show that $f(k)=k+\Theta (\sqrt {k})$ when multiple edges are allowed.
\end{abstract}

\section{Introduction}	

Little is known on the relation between the crossing number and the chromatic number. In this sense Albertson's conjecture (see \cite{ACF}), that if $\chi(G)\geq r$, then $cr(G)\geq cr(K_r)$, has taken a great interest.
Albertson's conjecture has been proved \cite{ACF,BT,OZ} for $r\leq16$.
It is related to Haj\'os' Conjecture that every $r$-chromatic graph contains a subdivision of $K_r$.
If $G$ contains a subdivision of $K_r$, then $cr(G)\geq cr(K_r)$.
Thus Albertson's conjecture is weaker than Haj\'os' conjecture, however Haj\'os' conjecture is false for any $r\geq 7$ \cite{C}.
%Naturally one may ask whether Hadwiger's conjecture implies Albertson's conjecture, {\it i.e.}, if $G$ has $K_r$ as minor and $G$ is $(r-1)$-edge-connected, then $cr(G)\geq cr(K_r)$.
%tAlbertson et al. \cite{ACF} started to study whether for $r\geq 5$ the complete graphs are the unique $r$-coloring-critical graphs with minimum crossing number.
%Let $CG$ denote the {\em cone} of a graph $G$. %, that is the graph $CG$ obtained from $G$ adding an {\em apex} vertex adjacent to all the vertices in $G$.

The \emph{cone} of a graph $G$ is the graph $CG$ obtained from $G$ by adding an \emph{apex}, a new vertex that is adjacent to each vertex in $G$.
Many properties of a graph are automatically transferred to its cone.  For example, if $G$ is $r$-coloring-critical, then $CG$ is $(r+1)$-coloring-critical.
%, and if $G$ satisfies Albertson's conjecture, then $CG$ also satisfies it.
%In an attempt to understand Albertson's conjecture %, stating \cite{ACF} that  {\em  every graph with crossing number at least $cr(K_{n})$ has chromatic number at least $\chi(K_{n})$},
During the Crossing Numbers Workshop in 2013, in an attempt to understand Alberston's conjecture, Richter proposed the following problem: Given an integer $n\geq 5$ and a graph $G$ with crossing number at least $cr(K_{n})$, does it follow that the crossing number of its cone $CG$ is at least $cr(K_{n+1})$?
There are examples where these two values can differ arbitrarily (for instance, if $G$ is the disjoint union of $K_{4}$'s and $K_{5}$'s).
What is less clear is how close these values can be.

The answer to Richter's question is positive for the first interesting case when $n=5$:  Kuratowski's theorem implies that the cone of any graph with crossing number at least $cr(K_{5})=1$ contains a subdivision of $CK_{5}$ or
$CK_{3,3}$, and each of these graphs has crossing number at least $cr(K_{6})=3$.
Unfortunately, the answer is negative for the next case, as the graph in Figure \ref{figure:counterexample1} shows.
This graph has crossing number 3, and a cone with crossing number at most $6$, and this is less than $cr(K_{7})=9$.
%It will take a predominant role along this paper as we try to answer what is
This motivated us to investigate the following question.

\begin{problem}\label{problem:variation}
	For each $k\geq 0$, find the smallest integer $f(k)$ for which there is a graph $G$ with crossing number at least $k$ and its cone has  $cr(CG)=f(k)$.
\end{problem}

\begin{figure}[htb]
		\begin{center}
				\begin{tikzpicture}[scale=1]
				\tikzstyle{every node}=[minimum width=0pt, inner sep=1pt, circle]
					%\draw (0,0) circle (.35);
					\draw (0,0) circle (1);
					\draw (270:.35) node (3) [draw,fill=white] {};
					\draw (30:.35) node (1) [draw,fill=white] {};
					\draw (150:.35) node (2) [draw,fill=white] {};
					\draw (0:1) node (32) [draw,fill=white] {};
					\draw (60:1) node (11) [draw,fill=white] {};
					\draw (120:1) node (12) [draw,fill=white] {};
					\draw (180:1) node (21) [draw,fill=white] {};
					\draw (240:1) node (22) [draw,fill=white] {};
					\draw (300:1) node (31) [draw,fill=white] {};
					\draw (12) -- (1) -- (11) -- (2) -- (1) -- (32) -- (3) -- (31) -- (1) -- (3) -- (22) -- (2) -- (21) -- (3) -- (2) -- (12);
				\end{tikzpicture}
		\end{center}
		\caption{A counterexample to Richter's question when $n=6$.}
		\label{figure:counterexample1}
	\end{figure}
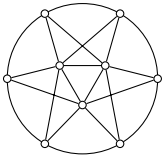

Note that $f(k)$ can also be defined as the largest integer such that every graph with $cr(G)\geq k$, has $cr(CG)\geq f(k)$.
An upper bound to the function $f(k)$ is obtained from the graph in Figure \ref{figure:counterexample1}, by changing the multiplicity of each edge to $r$.
Any drawing of the new graph has at least $3r^{2}$ crossings, and its cone has crossing number $3r^{2}+3r$.
This shows that $f(k)\leq k+\sqrt{3k}$.
Our main result shows that this is close to be best possible.

\begin{theorem}\label{thm:lowerbound}
		Let $G$ be a graph with $cr(G)\geq k$. Then
		$ cr(CG)\geq k+\sqrt{k/2}$.
\end{theorem}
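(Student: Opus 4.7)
Take an optimal drawing $D$ of $CG$ and split its crossings into three classes: $c_{GG}$, crossings between two edges of $G$; $c_{vG}$, crossings between a spoke and an edge of $G$; and $c_{vv}$, crossings between two spokes. Then $cr(CG)=c_{GG}+c_{vG}+c_{vv}$, and since removing $v$ together with its incident spokes from $D$ yields a drawing of $G$, we have $c_{GG}\geq cr(G)\geq k$. Writing $T:=c_{vG}+c_{vv}$, it follows that $T\leq cr(CG)-cr(G)$. The theorem then reduces to the key inequality
$$cr(G)\leq 2T^2,$$
for then $cr(G)\leq 2(cr(CG)-cr(G))^2$, which rearranges to $cr(CG)\geq cr(G)+\sqrt{cr(G)/2}\geq k+\sqrt{k/2}$.

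The plan to prove this key inequality is to produce, from $D$, a new drawing of $G$ with at most $2T^2$ crossings. Let $\pi$ be the rotation at $v$ in $D$, which induces a cyclic order on $V(G)$. The construction reroutes a carefully chosen subset $R\subseteq E(G)$ of at most $T$ edges through the apex: each rerouted edge $uw\in R$ is redrawn as a curve closely following the spokes $vu$ and $vw$. The crossings in the new drawing of $G$ split into three types: between two non-rerouted edges (inherited from $D$); between a rerouted edge and a non-rerouted one (at the spoke--$G$ crossings of the rerouted spokes in $D$); and between two rerouted edges (either near $v$, when the endpoints alternate in $\pi$, or along a common spoke via a spoke--spoke crossing of $D$). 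A Jordan-curve argument, applied to the closed curve $e\cup vu\cup vw$ for each edge $e=uw$, ties the cyclic order $\pi$ to the crossings of $D$: a pair of $G$-edges whose endpoints alternate in $\pi$ either already crosses in $D$, or else forces a spoke-related crossing elsewhere in $D$.

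The main obstacle will be selecting $R$ and tightening the charging argument. One must show that some choice of $R$ with $|R|=O(T)$ simultaneously reduces the non-rerouted $G$-$G$ crossings enough and introduces at most $O(T^2)$ crossings among rerouted edges, yielding the bound $2T^2$. The quadratic scaling arises naturally: a set of $|R|=O(T)$ edges alternating in $\pi$ can mutually cross at most $\binom{|R|}{2}=O(T^2)$ times near $v$, which matches the permitted budget. Getting the exact constant $2$ likely requires balancing the three contributions via an averaging argument over the choice of $R$.
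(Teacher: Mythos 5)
Your opening reduction is sound arithmetic (and note that in an optimal drawing $c_{vv}=0$, so $T$ is just the number of spoke--$G$ crossings, the quantity $s_a$ of the paper), but the entire content of the theorem is packed into the unproven ``key inequality'' $cr(G)\leq 2T^2$, and the plan you sketch for it cannot succeed. In your rerouted drawing, the crossings between pairs of \emph{non-rerouted} edges are, as you yourself note, inherited from $D$; there are up to $c_{GG}\geq k$ of these, and rerouting a set $R$ of $O(T)$ edges only removes those crossings that involve an edge of $R$. For the final count to be $O(T^2)$ you would need $O(T)$ edges to cover essentially all $G$--$G$ crossings of $D$, and there is no reason such a set exists: if the $k$ crossings are spread over many edges each crossed once, any $T$ edges cover at most $T$ of them and the new drawing still has about $k-T$ crossings. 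So no choice of $R$, and no averaging over choices of $R$, closes this gap; the local ``reroute through the apex'' operation simply does not touch the bulk of the crossings you need to eliminate. (It is also not clear that $cr(G)\le 2T^2$ is even true for every optimal drawing of $CG$; the paper only establishes the theorem by contradiction, using the assumed bound on $cr(CG)$ inside its estimates.)

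The missing idea is a \emph{global} redrawing step. The paper cuts each $G$-edge at each of its $T$ crossings with spokes, deleting a tiny segment $\sigma_i$ at each such crossing; the resulting graph $G_0$ still has $t\approx k$ crossings but now has all its vertices on the face containing the apex, i.e.\ it admits a $1$-page drawing. Edwards' max-cut bound (Corollary \ref{cor:1-2-page}) then lets one move a large cut of the circle graph to a second page, producing a $2$-page drawing of $G_0$ with fewer than $t/2-\sqrt{t/8}+1/8$ crossings --- this halving, with the $\sqrt{t/8}$ excess, is where the $\sqrt{k/2}$ in the statement actually comes from. Reinserting the $T$ cut segments costs at most $(k-1)/2$ additional crossings (bounded via a depth argument yielding $m< s_a^2+k/2<k$), and the total lands strictly below $k$, contradicting $cr(G)\geq k$. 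Your Jordan-curve observation relating alternation in the rotation at the apex to crossings is a correct and relevant fact --- it is essentially why the cut-open drawing is governed by the circle graph $C_D$ --- but it feeds into the max-cut argument, not into a $\binom{|R|}{2}$ count of interactions among $O(T)$ rerouted edges.
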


Thus we have the following:

\begin{corollary}
	For multigraphs we have $f(k) = k + \Theta(\sqrt{k}\,)$.
\end{corollary}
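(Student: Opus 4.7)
Let $D$ be an optimal good drawing of $CG$ and decompose $cr(CG) = a + b$, where $a$ counts crossings between two edges of $G$ and $b$ counts crossings between a $G$-edge and an apex-edge; since $D$ is good, two apex-edges never cross. Deleting the apex $v$ yields a drawing of $G$ with $a$ crossings, so $a \geq cr(G) \geq k$, and if $b \geq \sqrt{k/2}$ the bound $a+b \geq k+\sqrt{k/2}$ is immediate. The plan focuses on ruling out the regime $b < \sqrt{k/2}$ together with $a$ close to $k$.

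Let $\sigma$ denote the rotation of the $G$-vertices around $v$ in $D$ and let $I(\sigma)$ be the number of pairs of $G$-edges whose endpoints interleave in $\sigma$. A second drawing of $G$ is obtained from $D$ by rerouting every edge $u_iu_j$ along the cone path $u_i \to v \to u_j$ and perturbing near $v$; two rerouted edges cross once at $v$ precisely when their endpoints interleave in $\sigma$, so this drawing has exactly $I(\sigma)$ crossings and hence $I(\sigma) \geq cr(G) \geq k$. The main technical step is a parity argument: for each edge $e = u_iu_j$ of $G$, the triangle $\bar e := e \cup vu_i \cup vu_j$ is a simple closed curve in $D$; for two $G$-edges $e,e'$ with four distinct endpoints, $\bar e$ and $\bar{e'}$ share only $v$, and they meet transversally there if and only if the four endpoints interleave in $\sigma$. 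Since two simple closed curves on the sphere intersect in an even number of points, the intersections of $\bar e$ with $\bar{e'}$ away from $v$ are odd (and therefore $\geq 1$) for each interleaving pair. Summing over interleaving pairs yields
\[
a \;+\; \sum_{x = e \cap vu_k} W_{e,k} \;\geq\; I(\sigma) \;\geq\; k,
\]
where $W_{e,k}$ is the number of $G$-edges at $u_k$ whose second endpoint lies on the arc of $\sigma$ separated from $u_k$ by the endpoints of $e$.

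To convert this linear bound into the quadratic bound $b \geq \sqrt{k/2}$ in the regime $a \approx k$, I would combine it with an averaging argument over the $2^{|E(G)|}$ hybrid drawings obtained by keeping each $G$-edge as drawn in $D$ or rerouting it along the cone path, independently with probability $1/2$. A direct expected-value computation gives $a + I(\sigma) + 2\sum_k c_k \deg_G(u_k) \geq 4k$, where $c_k$ denotes the number of $G$-edges crossing the apex-edge $vu_k$ and $\sum_k c_k = b$; coupled with $a, I(\sigma) \geq k$ this forces $\sum_k c_k \deg_G(u_k) \geq k$. A Cauchy--Schwarz step should then yield $b^2 \geq k/2$. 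The main obstacle is avoiding the naive estimate $\sum_k c_k \deg_G(u_k) \leq \Delta(G)\,b$, which is too weak in the multigraph regime of Figure~\ref{figure:counterexample1}, where $\Delta(G)$ grows with the edge multiplicity $r$ but the theorem still claims the $\sqrt{k/2}$ bound. The plan to sidestep this is to reparameterise the sum by pairs of apex-edges $(vu_k, vu_{k'})$, so that each pair contributes at most a constant to the relevant count, and to combine this with the parity inequality above to close the argument with no explicit dependence on $\Delta(G)$.
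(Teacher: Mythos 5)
Your proposal has two genuine gaps. First, the corollary asserts $f(k)=k+\Theta(\sqrt{k}\,)$, which requires an \emph{upper} bound as well as a lower bound: one must exhibit, for each $k$, a multigraph $G$ with $cr(G)\ge k$ whose cone has crossing number $k+O(\sqrt{k})$. The paper obtains this by replacing every edge of the graph in Figure~\ref{figure:counterexample1} by $r$ parallel edges, giving a graph with crossing number at least $3r^2$ whose cone has crossing number $3r^2+3r$, hence $f(k)\le k+\sqrt{3k}$. Your proposal never addresses this half; everything you write is aimed at the lower bound, i.e.\ at reproving Theorem~\ref{thm:lowerbound}.

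Second, your lower-bound argument, while a genuinely different route from the paper's (the paper cuts each $G$-edge at its crossings with the apex edges to obtain a $1$-page drawing, then applies Edwards' max-cut bound via Corollary~\ref{cor:1-2-page} to pass to a cheaper $2$-page drawing), does not close. The decisive step is broken: from the averaged inequality $a+I(\sigma)+2\sum_k c_k\deg_G(u_k)\ge 4k$ you claim that the \emph{lower} bounds $a\ge k$ and $I(\sigma)\ge k$ ``force'' $\sum_k c_k\deg_G(u_k)\ge k$. The implication goes the wrong way: lower bounds on $a$ and $I(\sigma)$ only make the inequality easier to satisfy with $\sum_k c_k\deg_G(u_k)=0$; to extract anything you would need \emph{upper} bounds on both, and while the contradiction hypothesis caps $a$ below $k+\sqrt{k/2}$, there is no control whatsoever on $I(\sigma)$, which can exceed $k$ by an arbitrary amount. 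Moreover, even granting $\sum_k c_k\deg_G(u_k)\ge k$, the passage to $b^2\ge k/2$ is exactly the point you yourself flag as the main obstacle, and the proposed fix (reparameterising over pairs of apex edges) is not carried out. As it stands, neither half of the $\Theta$ is established.
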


The paper is organized as follows. Page drawings, a concept intimately related to  drawings of the cone of a graph, are defined in Section \ref{Sec2} and used throughout the subsequent sections. Although, there seems to be a connection between 1-page drawings and drawings of the cone, their exact relationship is much more subtle. Our proofs are instructive in this manner and provide further understanding of these concepts. 

The proof of our main result, Theorem \ref{thm:lowerbound} is provided in Section \ref{Sec3}.
In Section \ref{Sec4}, we restrict Problem \ref{problem:variation} to the case of simple graphs. 
To distinguish between these two  problems we use $f_{s}(k)$ instead of $f(k)$.  Along this paper, a graph is allowed to have multiple edges but no loops; when  our graphs  have no multiple edges, then we refer   them as simple graphs.
\begin{comment}
In this section, we give some exact values of $f_{s}(k)$, shown in Table \ref{table}.

\begin{table}[h!]
\label{table}
		\begin{center}
		\begin{tabular}{|c||c|c|c|c|c|c|c|c|c|c|c|c|c|c|}
			\hline
			$k$ & 0 & 1 & 2 & 3 & 4 & 5 & 6 & 7 & 8 & 9 & 18 & 36 & 60 & 100\\
			\hline
			$f_{s}(k)$ & 0 & 3 & 5 & 6 & 8 & $\leq11$ & $\leq13$ & $\leq15$ & $\leq17$ & $\leq18$ & $\leq36$ & $\leq60$ & $\leq100$ & $\leq 150$\\
			            &   &   &   &  &  & $\geq9$ & $\geq10$ & $\geq11$ & $\geq12$ & $\geq13$ & $\geq22$ & $\geq41$ & $\geq66$ & $\geq108$\\
			\hline
		\end{tabular}
		\end{center}
		\caption{Some values of the function $f_{s}(k)$.}
		\label{table:valuesofF}
	\end{table}

\end{comment}
We find the smallest values of $f_s$ by showing that $f_s(1)=3$, $f_s(2)=5$, $f_s(3)=6$, $f_s(4)=8$  and $f_s(5)=10$. These initial values may suggest that $f_s(k)\ge 2k$. However, in Section \ref{sect:asymptotics} we show that
$$
  f_s(k)=k+o(k),
$$
and provide additional justification for a more specific conjecture that
$$
  f_s(k) = k + \sqrt{2}\,k^{3/4}(1+o(1)).
$$

\section{Page drawings}\label{Sec2}

In this section we describe a perspective provided from  considering {\em page drawings} of graphs, a concept that has  been studied in its own and has interesting applications. The relation between 1- and 2-page drawings has shown to be handy as it is used in the proofs of Theorems \ref{thm:lowerbound} and \ref{thm:cr(CG)geq3}. A more detailed  discussion on the relevant aspects of this section  can be found  in \cite{BE14,BZ,KPS}.

 For an integer  $k\geq 1$, a {\em $k$-page book} consists  of $k$ half planes sharing their boundary line $\ell$ (spine).
A {\em $k$-page-drawing} is a drawing of
a graph in which vertices are placed in the spine of a $k$-page book, and each edge arc
is contained in one page. A convenient way to visualize a $k$-page drawing is by means of the {\em circular model}.
In this model each page is represented by a  unit 2-dimensional disk, so that the vertices are arranged identically on
each disk boundary and each edge is drawn entirely in exactly one  disk. In this work we are   only
interested in  $1$ and $2$-page drawings, and, to be more precise, in the following problem.

\begin{problem}
Given a $1$-page drawing of a graph $G$ with $k$ crossings, find an upper bound on
the number of crossings of an optimal 2-page drawing of $G$ while having the order of vertices of $G$ on the spine unchanged.
\end{problem}

In other words, if the drawing of $G$ in the plane is such that all the vertices are incident to the  outer-face (which is equivalent to having a 1-page drawing),  what is the most efficient way to redraw some edges in the outer-face to reduce the number of crossings?
For this purpose, we define the {\em circle graph} $C_{D}$ of   any $1$-page drawing $D$ of $G$ as the graph whose vertices are the edges of $G$, and any two elements are adjacent if they cross in $D$. Note that $C_D$ depends only on the cyclic order of the vertices of $G$ in the spine.

A related problem was previously formulated by Kainen in \cite{Kainen}, where he studied the {\em outerplanar crossing number} of a graph as the minimum number of crossings in any drawing of $G$ so that all its vertices are incident to the same face.
Clearly, the crossing number of $CG$ is at most the outer-planar crossing number of $G$. 	
Although,  Kainen was interested in finding an $n$-vertex  graph that has the largest difference between its crossing number and its outer-planar crossing number, for us it will be useful to consider drawings in which the vertices are incident to the same face.

Turning a 1-page drawing into a 2-page drawing is equivalent to finding a bipartition $(X,V(C_{D})\setminus X)$ of the vertices of $C_{D}$, each part representing the set of edges of $G$ drawn in one of the pages.
Minimizing the number of crossings in the obtained 2-page drawing of $G$ is equivalent  to maximize  the  number edges in $C_{D}$  between $X$ and $V(C_{D})\setminus X$. This last problem is known as  the {\em max-cut problem}, and if the considered graph $C_D$ has $m$ edges, then, a well-known result of Erd\H{o}s \cite{Erdosmaxcut} states that its maximum edge-cut has size more than $m/2$.  Improvements to this general bound are known (see \cite{Ed1}, \cite{Ed2} and a more recent survey \cite{BS}). For our purpose the following bound of Edwards will be useful.

\begin{lemma}[Edwards \cite{Ed1,Ed2}]
\label{lem:maxcut Edwards}
Suppose that $G$ is a graph of order $n$ with $m\geq 1$ edges. Then $G$ contains a bipartite subgraph with at least $\frac{1}{2}m + \sqrt{\frac{1}{8}m+\frac{1}{64}} - \frac{1}{8} > \frac{1}{2}m$ edges.
\end{lemma}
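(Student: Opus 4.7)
The plan is to reduce the lemma to the stronger statement that every \emph{connected} graph $H$ on $n$ vertices with $m$ edges admits a bipartite subgraph with at least $\tfrac{m}{2}+\tfrac{n-1}{4}$ edges --- Edwards' core inequality. Granting this, the lemma follows in a few routine steps. First, delete all isolated vertices of $G$ (they affect neither $m$ nor the size of the largest bipartite subgraph). Next, apply the connected bound to each component $G_j$ (with $n_j$ vertices and $m_j$ edges) and sum, obtaining a bipartite subgraph of $G$ with at least $\tfrac{m}{2}+\sum_{j}(n_j-1)/4$ edges. Since $m_j\leq\binom{n_j}{2}$ implies $n_j-1\geq(\sqrt{1+8m_j}-1)/2$, and since $\sum_j\sqrt{1+8m_j}\geq (k-1)+\sqrt{1+8m}$ (a short induction on the number of components $k$, based on the two-term inequality $(\sqrt{1+8a}+\sqrt{1+8b})^2\geq(1+\sqrt{1+8(a+b)})^2$, which reduces to $64ab\geq0$), the desired expression $\tfrac{m}{2}+\sqrt{m/8+1/64}-\tfrac{1}{8}$ drops out after the identity $(\sqrt{1+8m}-1)/8=\sqrt{m/8+1/64}-1/8$.

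For the connected case I would induct on $n$; the base $n=1$ is trivial. In the inductive step, pick a vertex $v$ that is a leaf of some spanning tree of $H$, so $H-v$ remains connected. Apply the hypothesis to $H-v$ to obtain a bipartition of $V(H)\setminus\{v\}$ cutting at least $(m-d(v))/2+(n-2)/4$ edges; then reinsert $v$ on the side cutting more of its $d(v)$ incident edges, gaining $\lceil d(v)/2\rceil$. When $d(v)$ is odd, the gain is $d(v)/2+1/2$, which combined with the inductive total exceeds the required $m/2+(n-1)/4$; when $d(v)$ is even, the gain is only $d(v)/2$ and we fall exactly $1/4$ short of the target.

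Closing this even-degree deficit is the main obstacle. My plan is to amortize across pairs of successive leaf-removals: instead of processing a single vertex at a time, handle two leaves together and argue via case analysis (on whether the two leaves are adjacent, share a common neighbor, or are independent in the residual graph) that the combined gain over every pair of steps recovers the missing $1/4$. Edwards' original paper carries out exactly this kind of bookkeeping; an alternative formulation uses a potential function tracking the parity of degrees in the shrinking graph. I expect this combinatorial step to be the delicate portion of the argument, whereas the componentwise reduction and the $\sqrt{1+8m}$ algebra are routine.
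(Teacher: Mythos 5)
Your outer reduction is correct and cleanly done: the passage from the stated bound to the claim that every connected graph on $n$ vertices with $m$ edges has a cut of size at least $\tfrac{m}{2}+\tfrac{n-1}{4}$, via deleting isolated vertices, summing over components, the estimate $n_j-1\ge(\sqrt{1+8m_j}-1)/2$, the superadditivity $\sum_j\sqrt{1+8m_j}\ge (k-1)+\sqrt{1+8m}$, and the identity $(\sqrt{1+8m}-1)/8=\sqrt{m/8+1/64}-1/8$, all check out. (The paper itself does not prove this lemma at all --- it is quoted from Edwards --- so you are reconstructing a cited classical result rather than paralleling an argument in the text.)

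The genuine gap is exactly where you flag it: the connected bound $\tfrac{m}{2}+\tfrac{n-1}{4}$ is the entire content of Edwards' theorem, and your proposal does not prove it. The single-leaf induction loses $\tfrac14$ whenever the removed vertex has even degree, and the proposed repair --- processing two spanning-tree leaves at once and case-splitting --- fails in precisely the case you list last. If $u$ and $v$ are independent, then over the four ways of inserting them into a bipartition of $H-u-v$ the total gain is exactly $2(d(u)+d(v))$, and if both degrees are even and each vertex sees its neighbourhood split evenly, all four placements gain exactly $\tfrac{d(u)+d(v)}{2}$; no averaging or parity argument recovers the missing $\tfrac12$. The extra half-edge is only forced when $u$ and $v$ are \emph{adjacent} (then the placements $(+,-)$ and $(-,+)$ jointly beat $(+,+)$ and $(-,-)$ by the edge $uv$), and choosing such an adjacent pair so that the induction goes through requires controlling the components of $H-u-v$ and redistributing the $\tfrac{n_j-1}{4}$ terms among them --- this is the actual substance of Edwards' proof, not routine bookkeeping. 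As it stands, the proposal reduces the lemma to an unproven statement that is strictly stronger than the lemma itself, so it cannot be accepted as a proof; either carry out the adjacent-pair induction in full or cite the connected bound as known.
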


In our context, this  result translates to the following observation that we will use.

\begin{corollary}\label{cor:1-2-page}
Let $D$ be a 1-page drawing of a graph $G$ with $k\ge1$ crossings.
Then some edges of $G$ can be redrawn in a new page, obtaining a 2-page drawing with at most $\frac{1}{2}k - \sqrt{\frac{1}{8}k+\frac{1}{64}} + \frac{1}{8}$ crossings.
Such a drawing can be found in time $O(|E(G)|+k)$.
\end{corollary}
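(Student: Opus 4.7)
The plan is to translate the max-cut interpretation of 1-to-2-page conversion (mentioned just before the statement) into a direct application of Edwards' lemma. First, I would form the circle graph $C_D$ associated with the 1-page drawing $D$. Its vertex set is $E(G)$ and its edge set records precisely the pairs of edges of $G$ that cross in $D$, so $|V(C_D)|=|E(G)|$ and $|E(C_D)|=k$. The key geometric observation is that, since the cyclic order of vertices on the spine is preserved, two chords drawn on the same page cross iff their endpoints interleave, exactly the condition captured by adjacency in $C_D$; two chords sent to different pages lie in disjoint half-planes and therefore do not cross.

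Next, given any bipartition $(X, V(C_D)\setminus X)$, I would redraw the edges of $G$ corresponding to $X$ on a new page, keeping all other edges on the original page. By the observation above, the number of crossings in the resulting 2-page drawing equals $|E(C_D)|$ minus the number of edges of $C_D$ in the cut defined by $X$. Hence minimising crossings is equivalent to maximising the cut in $C_D$, and applying Lemma~\ref{lem:maxcut Edwards} to $C_D$ (with $m=k\ge 1$) yields a bipartite subgraph with at least $\tfrac{1}{2}k+\sqrt{\tfrac{1}{8}k+\tfrac{1}{64}}-\tfrac{1}{8}$ edges. Subtracting gives the stated upper bound
\[
k-\Bigl(\tfrac{1}{2}k+\sqrt{\tfrac{1}{8}k+\tfrac{1}{64}}-\tfrac{1}{8}\Bigr)=\tfrac{1}{2}k-\sqrt{\tfrac{1}{8}k+\tfrac{1}{64}}+\tfrac{1}{8}.
\]

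For the running time, I would invoke the linear-time constructive version of Edwards' bound (as discussed in \cite{Ed1,Ed2,BS}); running it on $C_D$ takes time $O(|V(C_D)|+|E(C_D)|)=O(|E(G)|+k)$. Building $C_D$ itself from $D$ is also $O(|E(G)|+k)$ since its edges are in bijection with the $k$ crossings of $D$, and relabelling the pages of the chords is again linear.

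The only subtle point is verifying that the bipartition really does produce a valid 2-page drawing with the claimed crossing count — i.e., that no new crossings are introduced when we flip chords to the opposite half-plane and that all ``cross-page'' crossings disappear. This is handled by the interleaving criterion noted above, which depends solely on the cyclic order of endpoints on the spine, an invariant that we explicitly keep fixed. Everything else is a direct substitution into Edwards' bound.
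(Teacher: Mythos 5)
Your proof is correct and follows exactly the route the paper sets up in the discussion preceding the corollary (circle graph $C_D$, the max-cut reformulation, and Lemma~\ref{lem:maxcut Edwards}); the paper itself defers the formal write-up to its full version, but your argument is the intended one. The only implicit assumption worth stating is that in the 1-page drawing each pair of edges crosses at most once (so that $|E(C_D)|=k$), which is the standard normalization and is what the paper's remark that $C_D$ depends only on the cyclic vertex order presupposes.
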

The proof of Corollary \ref{cor:1-2-page} will be provided in the full version.
%\begin{proof}
%We can turn a 1-page drawing $D$ with $k$-crossings into a 2-page drawing as follows. We define the circle graph $C_D$ whose vertices are edges of $G$ and whose edges correspond to crossings in $D$. By taking a bipartition of $V(C_D)$ containing at least $\frac{1}{2}k + \sqrt{\frac{1}{8}k+\frac{1}{64}} - \frac{1}{8}$ edges and drawing the edges of $G$ corresponding to one part of the bipartition in the second page, we obtain a 2-page drawing as claimed. Such a partition of $V(C_D)$ exists by Lemma \ref{lem:maxcut Edwards}, and can be found in time $O(|E(G)|+k)$  by the results of Bollob\'as and Scott \cite[Theorem 22]{BS}.
%\qed \end{proof}

\section{Lower bound on the crossing number of the cone} \label{Sec3}

This section contains the proof of our main result.

\begin{proof}[of Theorem \ref{thm:lowerbound}]
Let $\widehat{D}$ be an optimal drawing of  the cone $CG$ of $G$ with apex $a$, and suppose $\widehat{D}$ has less than $ k+\sqrt{ k/2}$ crossings.
We   consider  $D=\widehat{D}|_{G}$, the drawing of  $G$ induced by $\widehat{D}$. If we let  $t$ to be the number
of crossings in $D$, then we have
\begin{equation} \label{equation1}
k\leq t< k+\sqrt{k/2}\text{.}
\end{equation}

For each vertex $v\in V(G)\cup\{a\}$, let $s_{v}$ be the number of crossings in $\widehat{D}$ involving edges incident with $v$.
Using that $cr(\widehat{D})<k+\sqrt{ k/2}$ and the left-hand side inequality in (\ref{equation1}), we obtain that $s_{a}<\sqrt{ k/2}$.

Consider $x_{1}$,\ldots,$x_{s_{a}}$, the crossings involving edges incident with $a$.
Since  $\widehat{D}$ is optimal,  each of these crossings is between an edge incident to $a$ and an edge in $G$.
Let $e_{1}$,\ldots,$e_{s_{a}}$ be the
list of edges in $G$ (we allow repetitions) so that $x_{i}$ is the crossing between $e_{i}$ and an edge incident with $a$.
We subdivide  each edge $e_{i}$ in $D$ using two points close to the crossing $x_{i}$, and we remove the edge segment $\sigma_{i}$ joining
these   new two vertices, in order to obtain a drawing $D_{0}$ of a graph $G_{0}$ with $t$ crossings (see Figure \ref{figure:drawingOfD_0}).

	\begin{figure}[h!]
		\begin{center}
			\begin{tabular}{c@{\extracolsep{2cm}}c@{\extracolsep{2cm}}c}
				\begin{tikzpicture}[scale=.9]
				\tikzstyle{every node}=[minimum width=0pt, inner sep=1pt, circle]
					\draw (30:2) node (0) [draw, fill=white, label = right:{\small $a$}] {};
					\draw (0:.35) node (1) [draw,fill=white] {};
					\draw (90:.35) node (2) [draw,fill=white] {};
					\draw (180:.35) node (3) [draw,fill=white] {};
					\draw (270:.35) node (4) [draw,fill=white] {};
					\draw (0:1) node (11) [draw,fill=white] {};
					\draw (60:1) node (12) [draw,fill=white] {};
					\draw (120:1) node (13) [draw,fill=white] {};
					\draw (180:1) node (14) [draw,fill=white] {};
					\draw (240:1) node (15) [draw,fill=white] {};
					\draw (300:1) node (16) [draw,fill=white] {};
					\draw (0) -- (1)
					(12) -- (0) -- (11);
					\draw (0) edge[bend right] (13)
					(0) edge[bend left] (16)
					(0) .. controls (360:2) and (300:2) .. (15)
					(0) .. controls (60:2.5) and (120:2.5) .. (14)
					(0) .. controls (90:2.5) and (150:1.5) .. (3);
					\draw (0) .. controls (0:1.5) and (330:1) .. (4);
					\draw (0) .. controls (60:1.5) and (100:1) .. (2);
					\draw (11) -- (12) -- (13) -- (14) -- (15) -- (16) -- (11);
					\draw (13) -- (2) -- (12) -- (1) -- (11);
					\draw (1) edge[bend left] (15);
					\draw (15) -- (3) -- (4) -- (2) -- (1) -- (4) -- (16);
					\draw (2) -- (3) -- (14) -- (4);
					\draw (14) -- (2);
					\draw (315:1.8) node (17) {\small $\widehat{D}$};
				\end{tikzpicture}
				&
				\hspace{-0.4in}
				\begin{tikzpicture}[scale=1.5]
				\tikzstyle{every node}=[minimum width=0pt, inner sep=1pt, circle]
					\draw (0:.35) node (1) [draw,fill=white] {};
					\draw (90:.35) node (2) [draw,fill=white] {};
					\draw (180:.35) node (3) [draw,fill=white] {};
					\draw (270:.35) node (4) [draw,fill=white] {};
					\draw (0:1) node (11) [draw,fill=white] {};
					\draw (60:1) node (12) [draw,fill=white] {};
					\draw (120:1) node (13) [draw,fill=white] {};
					\draw (180:1) node (14) [draw,fill=white] {};
					\draw (240:1) node (15) [draw,fill=white] {};
					\draw (300:1) node (16) [draw,fill=white] {};
					\draw[thick] (16) -- (11) -- (12) -- (13) -- (14);
					\draw (14) -- (15) -- (16);
					\draw (13) -- (2) -- (12) -- (1) -- (11);
					\draw (1) edge[bend left, thick] (15);
					\draw (15) -- (3) -- (4) -- (2) -- (1) -- (4) -- (16);
					\draw (2) -- (3) -- (14) -- (4);
					\draw (14) edge[thick] (2);
					\draw (330:1.1) node (e1) {\small $e_1$};
					\draw (30:1.1) node (e2) {\small $e_2$};
					\draw (90:1.1) node (e3) {\small $e_3$};
					\draw (150:1.1) node (e4) {\small $e_4$};
					\draw (148:.6) node (e5) {\small $e_5$};
					\draw (275:.75) node (e6) {\small $e_6$};
					\draw (220:1.25) node (e7) {\small $D$};
				\end{tikzpicture}
				&
				\hspace{-0.4in}
				\begin{tikzpicture}[scale=1.5]
				\tikzstyle{every node}=[minimum width=0pt, inner sep=1pt, circle]
					\draw (0:.35) node (1) [draw,fill=white] {};
					\draw (90:.35) node (2) [draw,fill=white] {};
					\draw (180:.35) node (3) [draw,fill=white] {};
					\draw (270:.35) node (4) [draw,fill=white] {};
					\draw (0:1) node (11) [draw,fill=white] {};
					\draw (60:1) node (12) [draw,fill=white] {};
					\draw (120:1) node (13) [draw,fill=white] {};
					\draw (180:1) node (14) [draw,fill=white] {};
					\draw (240:1) node (15) [draw,fill=white] {};
					\draw (300:1) node (16) [draw,fill=white] {};
					\draw[thick] (16) -- (11) -- (12) -- (13) -- (14);
					\draw (14) -- (15) -- (16);
					\draw (13) -- (2) -- (12) -- (1) -- (11);
					\draw (1) edge[bend left, thick] (15);
					\draw (15) -- (3) -- (4) -- (2) -- (1) -- (4) -- (16);
					\draw (2) -- (3) -- (14) -- (4);
					\draw (14) edge[thick] (2);
					
					\draw (330:.88) node (e11) [fill=white, minimum width=12pt] {};
					\draw (298:.42) node (e12) [fill=white, minimum width=5pt] {};
					\draw (4) edge[dotted] (340:.85) edge[dotted] (320:.85);
					
					\draw (30:.88) node (e2) [fill=white, minimum width=12pt] {};
					\draw (1) edge[dotted] (40:.85) edge[dotted] (20:.85);
					
					\draw (90:.88) node (e3) [fill=white, minimum width=12pt] {};
					\draw (2) edge[dotted] (100:.85) edge[dotted] (80:.85);
					
					\draw (150:.88) node (e4) [fill=white, minimum width=12pt] {};
					\draw (160:.55) node (e5) [fill=white, minimum width=7pt] {};
					\draw (3) edge[dotted] (160:.85) edge[dotted] (140:.85);
					\draw (220:1.25) node (e6) {\small $D_0$};
				\end{tikzpicture}\\[2mm]
%				$\widehat{D}$ & $D$ & $D_0$ \\
				(a) & \hspace{-0.4in} (b) & \hspace{-0.3in} (c)\\
			\end{tabular}
		\end{center}
		\caption{A drawing where the crossed edges are cut.
		}
		\label{figure:drawingOfD_0}
	\end{figure}
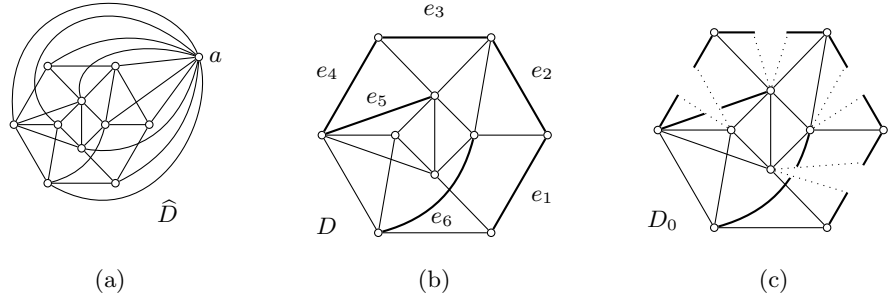

The obtained drawing $D_0$ has all its vertices incident to the face of $D_{0}$ containing the point corresponding to the apex vertex $a$ of $CG$ in $\widehat{D}$.
For simplicity, we may assume that this is the unbounded face of $D_{0}$. It follows that there exists a simple closed curve $\ell$ in the closure of this face, containing all the vertices of $G_{0}$.
Thus, $D_{0}$ gives rise to a $1$-page drawing of $G_{0}$ with spine $\ell$.

Now construct a new drawing of $G$ as follows:
	\begin{enumerate}
		\item
		Start with the 1-page drawing $D_0$. Partition the edges of $G_{0}$ according to Corollary \ref{cor:1-2-page}, and draw
			the edges  of one part in page 2 outside $\ell$.
		\item Reinsert edge segments $\sigma_{1},\ldots,\sigma_{s_{a}}$ as they where drawn in $D$, to obtain a drawing $D_{1}$ (of a subdivision) of $G$. These segments do not cross each other, but they may cross some of the edges of $G_0$ that we placed in page 2 in step 1. %{\color{red} These edges in $D_0$ are subsegments of the edges in $D$.}
	\end{enumerate}	
	
Now we  estimate the number of crossings in $D_{1}$. According to Corollary \ref{cor:1-2-page}, after step 1 we
obtain a 2-page drawing $D_{0}$ with less than $t/2-\sqrt{t/8}+1/8$ crossings. After step 2 we gain some new crossings between the added segments $\sigma_{1},\ldots,\sigma_{s_{a}}$ and the edges  of $G_0$ drawn on page 2 in step 1. 

	\begin{claim}\label{lemma:estimatednumbercrossings}
		The number of new crossings between $\sigma_{1},\dots,\sigma_{s_{a}}$ and the edges drawn on page 2 in step 1 is at most $(k-1)/2$.
	\end{claim}

	\begin{proof}
		We may assume that, for each $v\in V(G)$, $s_{v}< \sqrt{ k/2}$. Otherwise, by
	removing $v$ and all the edges incident to $v$, we obtain  a drawing of $CG-v$ containing a
	subdrawing of $G$, in which $v$ is represented as the apex, and this drawing has less than $k$ crossings,  a contradiction.
    
    Let $e\in E(G)$ be an edge having ends $u$, $v\in V(G)$. Suppose that $ay_1$,$\ldots$,$ay_{r_e}$ are the edges incident to $a$ that cross $e$ in $\widehat{D}$. We may assume that, for every $i$, $j$ with $1\leq i<j\leq r_e$, when we traverse $e$ from $u$ to $v$,  the crossing $x_i=e\cap ay_i$ precedes  the crossing $x_j=e\cap ay_j$. It is convenient to let $x_0=u$ and $x_{r_{e}+1}=v$.

   The edges of $G_0$ included in $D[e]$ are the segments of $D[e]-\{\sigma_1,\ldots,\sigma_{s_a}\}$. We enumerate these edges as $\tau_0^e$,$\dots$,$\tau_{r_e}^e$, so that  $\tau_i^e$ is included in the $x_ix_{i+1}$-arc of $D[e]$. Note that $\tau_1^e$ is incident to $u$, while $\tau_{r_e}$ is incident to $v$. 
   
   Let $T=\{\tau_i^e\;:\; e\in E(G)\; \text{and } 0\leq i\leq r_e\}$ be the set of edges of $G_0$. In Step 1, when we apply Corollary 2.2 to the edges in $D_0$, we obtain a partition $T_1\cup T_2$ of $T$. Instead of counting how many crossings are between the segments in 
   $\sigma_1,\ldots,\sigma_{s_a}$ and 
   the edges in one of the $T_i$'s when we redraw $T_i$ in page 2, we estimate the number $m$ of crossings   between $\sigma_1,\ldots,\sigma_{s_a}$ and the edges in $T$ when we draw all the crossing edges in $T$ in page 2. This will show that one of the two parts, either $T_1$ or $T_2$, can be drawn in page 2 creating at most $m/2$ crossings with the segments $\sigma_1,\ldots,\sigma_{s_a}$. To show our claim, it suffices to prove that $m\leq k-1$, and this is what we do next.
   
   For every point $p$ distinct from $a$ and contained in an edge $f$ incident to $a$, the {\em depth} $h(p)$ of $p$ is the number of crossings in $\widehat{D}$, contained in the open subarc of $f$ connecting $a$ to $p$. 
  When we redraw an edge $\tau_i^e$ in page 2, we can draw it so that it crosses at most $h(x_i)+h(x_{i+1})$  segments in $\sigma_1,\ldots,\sigma_{s_a}$. Such new drawing of $\tau_i^e$ is obtained  from letting the segment of $\tau_{i}^e$ near to $x_i$ follow the same dual path in $D$  that $x_i$ follows to reach $a$ via $ay_i$. Likewise the new end of $\tau_i^e$ near $x_{i+1}$ is defined. The new $\tau_{i}^e$ is obtained from connecting  the two {\em end segments} of $\tau_i^e$ inside the face of $D$ containing $a$. 
        
      Let $X(a)$ be the set of crossings involving edges incident to  $a$. For every $x\in X(a)$, there are precisely two elements in $T$, so  that when they are redrawn in page 2, one of its end segments mimics the arc between $x$ and $a$ inside the edge including $x$   and $a$. Each $v\in V(G)$ is incident to at most $s_v$ edges crossing in $D_0$. Then, for every $v\in V(G)$, there are are most $s_v$ edges in $T$, so  that when we redraw them in page 2, one of their ends mimics the dual path followed by the edge $\widehat{D}[xa]$. These two observations together imply that 
\begin{eqnarray*}
m &\leq&  
 \sum_{x\in X(a)} 2h(x)+\sum_{v\in V} h(v)s_v\\
 &<& 2\sum_{v\in V}\left(1+2+\ldots+(h(v)-1)\right)+\sqrt{k/2}\sum_{v\in V}h(v) \\
& \leq & \sum_{v\in V }{h(v)^2}+(\sqrt{k/2})s_a\leq \left(\sum_{v\in V} h(v)\right)^2+k/2\\
 &= & s_a^2+k/2< k\text{.}
\end{eqnarray*}
 Because $m$ is an integer less than $k$, $m\leq k-1$ as desired.
 \qed\end{proof}

At the end, we obtained a drawing $D_1$ of (a subdivision of) $G$ with less than $t/2 - \sqrt{t/8} + 1/8 + (k-1)/2$ crossings. Using (\ref{equation1}) it follows that
$$
  cr(D_{1})< \frac{1}{2}(k+\sqrt{k/2}) - \sqrt{t/8} + 1/8 + k/2 - 1/2
  = k + \sqrt{k/8} -\sqrt{t/8} - 3/8 < k\text{,}
$$
contradicting the fact that $cr(D_1)\geq cr(G)\geq k$.
\qed \end{proof}

\section{Exact values of the crossing number of the cone for simple graphs}
\label{Sec4}
	
In this section, we investigate the minimum crossing number of a cone, with the restriction of only considering simple graphs. We are interested in finding   the smallest integer  $f_{s}(k)$ for which there is a simple graph with crossing number at least $k$, whose cone has crossing number $f_{s}(k)$.
On one hand, we describe below a family of simple graphs  that shows that $f_{s}(k)\leq 2k$. Our best general lower bound is obtained from Theorem
\ref{thm:lowerbound}.    The main result in this section, Theorem \ref{thm:cr(CG)geq3},  help us to obtain exact values on $f_{s}(k)$ for cases when $k$ is small.

\begin{theorem}\label{thm:cr(CG)geq3}
Let $G$ be a simple graph with crossing number $k$. Then
\begin{itemize}
\item[(1)] if $k\geq 2$, then $cr(CG)\geq k+3$;
\item[(2)]  if $k\geq 4$, then $cr(CG)\geq k+4$;  and
\item[(3)] if $k\geq 5$, then $cr(CG)\geq k+5$.

\end{itemize}
\end{theorem}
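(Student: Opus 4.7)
The plan is to argue each of the three parts by contradiction, building on the 1-page to 2-page conversion machinery developed for Theorem~\ref{thm:lowerbound} and tightening the crossing bookkeeping in the small-crossing regime. For each part fix the corresponding constant $c\in\{3,4,5\}$ and assume, toward contradiction, that $cr(CG)<k+c$. Let $\widehat{D}$ be an optimal drawing of $CG$, let $D=\widehat{D}|_G$ be the induced drawing of $G$ with $t$ crossings, and write $s_a=cr(\widehat{D})-t$. Since $t\ge cr(G)=k$, we have $s_a\le c-1$.

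First I would repeat the apex-swap argument from the proof of Theorem~\ref{thm:lowerbound}: if $s_v\ge c$ for some $v\in V(G)$, then moving the apex role from $a$ to $v$ produces a drawing of a copy of $G$ inside $\widehat{D}\setminus v$ with at most $cr(\widehat{D})-s_v<k$ crossings, a contradiction. Hence $s_v\le c-1$ for every $v$. The set $S$ of vertices $v$ whose edge $av$ is crossed in $\widehat{D}$ thus has $|S|\le s_a\le c-1$, and every vertex of $V(G)\setminus S$ lies on the same face of $D$ as the apex $a$.

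I would then carry out the subdivision and removal of the apex-crossing segments $\sigma_1,\dots,\sigma_{s_a}$ to obtain a 1-page drawing $D_0$ of $G_0$ with $t$ crossings, apply Corollary~\ref{cor:1-2-page} to pass to a 2-page drawing, and reinsert the segments $\sigma_i$. The critical task is to sharpen the bookkeeping of the crossings added at reinsertion. Since both $s_a$ and the individual $s_v$ are now bounded by the small constant $c-1$, I would run a case analysis on the pair $(s_a,t)$ in the regime $s_a+t\le k+c-1$ and $t\ge k$; for each case the depths of the points in $X(a)$ (in the sense of Claim~\ref{lemma:estimatednumbercrossings}) can take only finitely many configurations, and I would enumerate them to verify that after reinsertion the total number of crossings drops strictly below $k$, contradicting $cr(G)=k$.

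The main obstacle is that the generic depth-sum estimate $m\le s_a^2+k/2$ from Claim~\ref{lemma:estimatednumbercrossings} is too crude when $k$ itself is small; it already fails at $s_a=2$, $k=2$ for part~(1), and only gets worse for parts~(2) and~(3), so Theorem~\ref{thm:lowerbound} cannot be invoked directly. To close the gap I expect to use the simplicity of $G$ in two ways: once, to ensure that in the apex-swap the relabelled copy of $G$ really embeds inside $CG-v$ without creating new multiple adjacencies; and once more at reinsertion time, exploiting that the circle graph $C_{D_0}$ of the 1-page drawing $D_0$ is loop-free and has no parallel edges, which allows a strictly better local max-cut choice than the Edwards estimate guarantees whenever the depth pattern around $X(a)$ is concentrated. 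Part~(3) with $s_a$ as large as $4$ is expected to be the most delicate, and will likely split further according to whether the crossings in $X(a)$ occur near the apex (small depth) or far from it (large depth), and according to which vertices of $S$ are themselves incident to crossings of $D$.
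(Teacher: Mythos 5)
Your framework (work by contradiction, bound $s_a\le c-1$, and use the apex-swap to bound every $s_v$) matches the paper's setup, but the heart of your proposal --- rerun the cut-and-reinsert argument of Theorem~\ref{thm:lowerbound} and close the small-$k$ gap by enumerating depth configurations of $X(a)$ --- is a plan rather than a proof, and it breaks exactly where you say it does. Already for part~(1) with $k=t=2$ and $s_a=2$ (both crossings on one apex edge $au$), the 2-page conversion yields $0$ crossings but the reinsertion bound $m\le\sum_x 2h(x)+\sum_v h(v)s_v$ gives up to $3$ new crossings, so the final count is not below $k$; no choice of depth configuration rescues this, because the obstruction is not the depths but the fact that all the ``savings'' of Corollary~\ref{cor:1-2-page} are proportional to $t$, which is useless when $t=k$ is tiny. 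Your two proposed repairs do not close this gap: the circle graph of a 1-page drawing is \emph{always} loop-free and simple, whether or not $G$ is, so simplicity of $G$ buys nothing over the Edwards bound there; and ``a strictly better local max-cut choice'' is never specified.

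The paper's proof takes a genuinely different, structural route that your proposal is missing. It classifies by the number of \emph{non-planar neighbours} of $a$ (your set $S$, of size $0$ to $c-1$) and then uses two redrawing moves that do not appear in your sketch: (a) any crossed edge of $D$ with both ends on $F_a$ and not incident to a vertex of $S$ can be rerouted inside $F_a$, killing its crossings outright; (b) a non-planar neighbour $v$ can be relocated to the apex's position, its edges to other vertices of $S$ routed along the corresponding apex edges, and its remaining edges drawn inside $F_a$ --- so the number of crossings that survive at $v$ is controlled by the degree of $v$ in the subgraph $H$ induced by $S$, not by any max-cut estimate. These moves, plus the degree count $\sum_{y} d_H(y)/2$, are what force the contradictions in the cases $|S|=2,3$; for part~(3) with $|S|=4$ one must further analyze whether $D[H]$ is a planar or crossing $K_4$, and simplicity of $G$ enters precisely there (e.g.\ producing a non-adjacent pair $u,v$ in $H$ when $H\neq K_4$), not through the circle graph. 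Without moves (a) and (b) and the analysis of $D[H]$, the cases with $t=k$ and $s_a=c-1$ cannot be completed, so the proposal as written has a genuine gap.
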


Before proving Theorem 	\ref{thm:cr(CG)geq3}, we describe a family of examples that is used to find an upper bound for $f_s(k)$, that is exact for the values  $k=3,4,5$.
%Before proving Theorem \ref{thm:cr(CG)geq3}, we start describing the best examples we know of a simple graph with crossing number $k$, whose cone has crossing number as small as possible.
Given an integer $k\geq 3$, the graph $F_{k}$ (Figure \ref{fig:upperBound}) is obtained from two disjoint cycles $C_{1}=x_{0}\dots x_{k-1}x_0$ and $C_{2} = y_{0}\dots y_{2k-1}y_0$ by adding, for each $i=0,\dots,k-1$, the edges $x_{i}y_{2i-2}$, $x_{i}y_{2i-1}$, $x_{i}y_{2i}$, $x_{i}y_{2i+1}$ (where the indices of the vertices $y_j$ are taken modulo $2k$).
It is not hard to see that $F_{k}$ has crossing number $k$: a drawing with $k$ crossings is shown in Figure \ref{fig:upperBound}.
To show that $cr(F_{k})\geq k$, for $i\in \{ 0,\ldots,k-1\}$, consider  $L^i$, the $K_4$ induced by the vertices in   $\{x_{i},x_{i+1},y_{2i},y_{2i+1}\}$.
Every $L^i$ is a subgraph of a $K_5$ subdivision of $F_k$, thus, in an optimal drawing of $F_k$, at least one of the edges in $L^i$ is crossed. This only guarantees that $\CR(F_k)\geq k/2$, as two edges from distinct $L^i$'s might be crossed. However,  if an edge from $L^i$ crosses an edge $e_j$ from some other $L^j$, then $F_k-e_j$ has a $K_5$ subdivision including $L^i$, exhibiting a new crossing in some edge in $L^i$. Therefore, every $L^i$ either has a crossing not involving an edge in another $L^j$, or there are least two crossings involving edges in $L^i$. This shows that $\CR(F_k)\geq k$.

% is a subgraph of a $K_{5}$ subdivision in  $F_{k}$.This means that in an optimal drawing of $F_{k}$, at least one of the edges in this $K_{4}$  is crossed.
%Therefore, any optimal drawing of $F_{k}$ has at least $k$ crossings.
%Also, from the drawing in Figure \ref{fig:upperBound}  it is easy to see that the crossing number of the cone of $F_{k}$ is at most $2k$.

\begin{figure}[ht]
	\begin{center}
		\begin{tikzpicture}[scale=2]
			\tikzstyle{every node}=[minimum width=0pt, inner sep=1pt, circle]
			
			% Circles
			%\draw [domain=-120:210] plot ({cos(\x)}, {sin(\x)});
			\draw (240:1) arc (-120:210:1) ;
			\draw [domain=-105:195] plot ({cos(\x)/2}, {sin(\x)/2});
			
			% Y's vertices
			\foreach \x in {0, 1, 2, 3, 4, 5, 6, 7, 8}
			{
				\draw (30*\x - 30:1) node (u\x) [draw, fill= white, label=30*\x-30:{\tiny $y_{\x}$}] {};
			}
			\draw (240:1) node (u9) [draw, fill= white, label=240:{\tiny $y_{2k-3}$}] {};
			\draw (270:1) node (u10) [draw, fill= white, label=270:{\tiny $y_{2k-2}$}] {};
			\draw (300:1) node (u11) [draw, fill= white, label=-60:{\tiny $y_{2k-1}$}] {};
			
			% X's vertices
			\foreach \x in {0, 1, 2, 3, 4}
			{
				\draw (60*\x - 45:0.5) node (v\x) [draw, fill= white, label=60*\x+120:{\tiny $x_{\x}$}] {};
			}
			\draw (255:0.5) node (v5) [draw, fill= white, label=60:{\tiny $x_{k-1}$}] {};
			
			% Edges
			\foreach \x in {1, 2, 3}
			{
				\pgfmathtruncatemacro{\i}{2*\x-2}
				\pgfmathtruncatemacro{\j}{2*\x-1}
				\pgfmathtruncatemacro{\k}{2*\x}
				\pgfmathtruncatemacro{\l}{2*\x+1}
				
				\foreach \y in {\i, \j, \k, \l}
				{
					\draw (u\y) -- (v\x);
				}
			}	
			\draw (u10) -- (v0);
			\draw (u11) -- (v0);
			\draw (u0) -- (v0);
			\draw (u1) -- (v0);
			\draw (u6) -- (v4);
			\draw (u7) -- (v4);
			\draw (u8) -- (v4);
			\draw (u9) -- (v5);
			\draw (u10) -- (v5);
			\draw (u11) -- (v5);

			% dots
			\foreach \x in {1, 2, 3}
			{
				%\draw (5*\x + 95:0.7) node [draw, fill= black, inner sep=.3pt] {};
				\draw (5*\x + 215:0.7) node [draw, fill= black, inner sep=.3pt] {};
			}
		\end{tikzpicture}
	\end{center}
	\caption{The graph $F_k$.}		
	\label{fig:upperBound}
\end{figure}
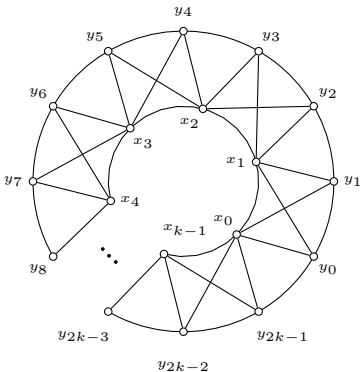

%If we draw two $K_{3,3}$ in so that  the two drawings are disjoint; each drawing has exactly one crossing; and the outer face of each drawing is incident to 4 vertices, then this shows that $f_{s}(2)\leq 5$.
The graph shown in Figure \ref{fig:GwithCR2}  has crossing number 2, and  its cone has crossing number 5.
This shows that $f_{s}(2)\leq 5$.
On the other hand, $F_{3}$, $F_{4}$, and $F_5$ serve as examples to show that
$f_s(k)\leq 2k$ for $k=3,4,5$. These bounds are tight for $2\leq k\leq 5$ by Theorem \ref{thm:cr(CG)geq3}.
	
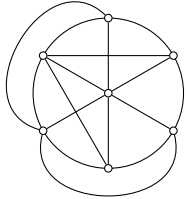
\begin{figure}[h]
	\begin{center}
	\begin{tikzpicture}[scale=1]
    		\tikzstyle{every node}=[minimum width=0pt, inner sep=1pt, circle]
    		\draw (0,0) circle (1);
    		\draw (0,0) node (0) [draw=black, fill=white] {};
    		\foreach \x in {1, 2, 3, 4, 5, 6}
    		{
    			\draw (60*\x - 30:1) node (\x) [draw=black,fill=white] {};
    			\draw (0) edge (\x);
    		}
    		\draw (1) -- (3);
    		%\draw (2) -- (4);
    		\draw (3) -- (5);
    		%\draw (4) -- (6);
    		%\draw (1) .. controls (55:2) and (125:2) .. (3);
    		\draw (2) .. controls (115:2) and (185:2) .. (4);
    		%\draw (3) .. controls (75:1.5) and (105:1.5) .. (5);
    		\draw (4) .. controls (235:2) and (305:2) .. (6);
    	\end{tikzpicture}
	\end{center}
	\caption{A graph with crossing number 2 whose cone has crossing number 5.}		
	\label{fig:GwithCR2}
\end{figure}

\begin{proof}[Proof of Theorem \ref{thm:cr(CG)geq3}]
Suppose $G$ is a graph with $cr(G)=k$. Let $\widehat{D}$ be  an optimal drawing of the cone $CG$, $D$ its restriction to $G$, and $F_{a}$ be the face of $D$ containing the apex $a$. The vertices of $G$ incident to $F_{a}$ are the {\em planar neighbors} of $a$.

Assume that $k\geq 2$, and suppose $\widehat{D}$ has exactly  $k+t$ crossings. Theorem \ref{thm:lowerbound} guarantees that $t\geq 1$. Since each edge from $a$ to a non-planar-neighbor introduce at least one crossing,  the apex $a$ has either 0, 1,  2, 3  or 4 non-planar neighbors (if $a$ has more than 4 non-planar neighbours, then any of the items in Theorem \ref{thm:cr(CG)geq3} is satisfied).

We start by assuming that $a$ has no non-planar neighbors.  In this  case, $D$ is a 1-page drawing of $G$. Corollary \ref{cor:1-2-page} implies that we can
obtain a new drawing of $G$ with less than $(k+t)/2$ crossings.
Thus $(k+t)/2> cr(G)=k$, which implies that $t\geq k+1$. In any case
of the theorem, this implies the conclusion, thus we may now assume that $a$ has at least one and at most $t$ non-planar neighbors.

{\bf (1)} Let us now assume that $k\geq 2$ and $t\leq 2$.

Suppose $a$ has exactly one non-planar neighbor $u$. Then $cr(D)$ has at most $k+1$ crossings. At least one edge incident to $u$ is crossed in $D$, otherwise, all the crossed edges have ends in $F_{a}$, and
using Corollary \ref{cor:1-2-page}, we obtain a drawing of $G$ with less than $(k+1)/2$ crossings, contradicting that $cr(G)=k$. If at least two crossings in $D$ involve edges incident to $u$, or if $D$ has $k$ crossings, then by redrawing $u$ in $F_{a}$, and adding all the edges to its neighbors without creating any crossings, we obtain a drawing of $G$ with less than $k$ crossings. Therefore $D$ has $k+1$ crossings, and exactly $k$ of them involve edges not incident to $u$. Again, we apply Corollary \ref{cor:1-2-page} to  obtain a drawing of $G$ with at most $\frac{1}{2}(k-1)<k$ crossings (this time we are more careful by
setting our two pages in such way that
  the edge  not incident to $u$ that crosses an edge incident to $u$ is redrawn in the page contained in $F_{a}$).

Finally, suppose $a$ has exactly two distinct non-planar neighbors $u$ and $v$. Then, $\widehat{D}$ has $k+2$ crossings; $D$ has $k$ crossings, and the edges $au$, $av$ are crossed exactly once. Notice that any crossed edge in $D$ is incident to either $u$ or $v$; otherwise, we can redraw such edge inside $F_{a}$, obtaining a drawing of $G$ with less than $k$ crossings.  Redraw $v$ in $\widehat{D}[a]$ (where $\widehat{D}[a]$ denotes the point representing $a$   in $\widehat{D}$); draw  the edge $uv$  (if it exists in $G$)  as the arc $\widehat{D}[au]$, and draw  the edges   from $v$ to its neighbors distinct from $u$, inside $F_{a}$ without creating  new crossings. Since every crossing in $D$ involves an edge incident with $v$, we obtain a drawing  of $G$ with at most one crossing, a contradiction.

{\bf (2)} Now,  suppose  that  $k\geq 4$ and that $t=3$.

\begin{comment}
 By  (1), we have that  $cr(CG)=k+3$.
The apex $a$ has at least one non-planar neighbor, otherwise $D$ is a 1-page drawing of $G$ with $k+3$ crossings, and using Corollary \ref{cor:1-2-page}
 $D$ can be turned  into a
drawing with $(k+3)/2$ crossings, and since $k\geq 4$, this is less than $k$.
Because $k\leq cr(D)\leq k+2$, $a$ has either 1, 2 or 3 non-planar neighbors.
\end{comment}

The case when the  apex $a$ has only one non-planar neighbor $u$ is similar to the above.
If at least three crossings in $D$ involve edges incident with $u$, then by redrawing $u$ and the edges incident to $u$ in $F_{a}$, we obtain a drawing with less than $k$ crossings, a contradiction. Thus, at  most two crossings involve edges incident to $u$. We redraw the remaining crossed edges according to Corollary \ref{cor:1-2-page} (if there is an edge $e$ that crosses an edge incident to $u$, in order  to remove an extra crossing,
we may choose this new drawing so that $e$ is redrawn in the page contained in $F_{a}$). If  2 crossings involve edges incident to $u$, then the obtained  drawing has at most $\frac{k}{2}+1$ crossings, where the  $+1$ comes from the fact that $e$ was drawn in the page contained in $F_{a}$. If at most one of the edges at $u$ is crossed, then the new drawing has at most $(k+1)/2$ crossings. In any case, since $k\geq 4$, the new drawing has less than $k$ crossings, a contradiction.

%{\bf Case.} The apex has two non-planar neighbors $u$ and $v$.

Let us now consider the case when the apex has two non-planar neighbors $u$ and $v$.  In this case, the drawing $D$ has either $k$ or $k+1$ crossings, and one of $\{au, av\}$, say $au$, is crossed only once. Let $L$ be the set of crossed edges in $D$
that are not incident to $u$ or $v$. Suppose there are at least two crossings involving only edges in $L$. Then, either there are two edges in $L$ that do not cross, or $L$ has an edge $e$ that crosses two other edges in $L$.
 In the former case, we redraw such pair of edges in $F_{a}$; in the latter case, we redraw $e$ in $F_{a}$. Any of these modifications yield a drawing with less than $k$ crossings. Thus, we may assume that at most one crossing in $D$ involves two edges not incident to  $u$ or $v$.
Redraw $v$ in $\widehat{D}[a]$; draw the edge $vu$ (if such edge exists in $G$) as $\widehat{D}[au]$; and the remaining edges from $v$
to its neighbors distinct from $u$ without creating  new crossings. The new drawing of $G$ has at most two crossings: possibly one in $\widehat{D}[av]$ and another  between edges in $L$, a contradiction.

Finally suppose that the apex $a$ has three non-planar neighbors $u$, $v$, $w$.
In this case $D$ has precisely $k$ crossings, and the edges $au$, $av$, $aw$ are crossed exactly once. Observe that any crossed edge in $D$
is incident to one of $\{u,v,w\}$, otherwise we can redraw such edge in $F_{a}$, obtaining a drawing of $G$  with less than $k$ crossings.

Let $H$ be the graph induced by $\{u,v,w\}$. If, for $x\in \{u,v,w\}$, $d_{H}(x)$ denotes the degree
of $x$ in $H$, then at most  $d_{H}(x)$ crossings involve edges at $x$.
Otherwise, by redrawing $x$ in $\widehat{D}[a]$; drawing the edges from $x$ to its neighbors in $H$ by using the respective edges from $a$; and, by drawing the remaining edges at $x$ in $F_{a}$ without creating new crossings, we obtain a drawing of $G$ with
less than $k$ crossings. So for each vertex $x\in \{u,v,w\}$, there are at most two crossings involving edges at $x$. Hence $D$ has at most three crossings, a contradiction.

{\bf (3)} The  proof can be found in Apendix \ref{app:1}, and it will be included in the full version of the paper.
	\qed \end{proof}

\section{Asymptotics for simple graphs} \label{sect:asymptotics}

Lastly, we try to understand the behaviour of $f_s(k)$ when $k$ is large.
The important part is the increase of the crossing number after adding the apex, thus we define
$$
    \phi_s(k) = f_s(k)-k.
$$
We have proved that $\phi(k)=f(k)-k\geq \frac{1}{2}k^{1/2}$. The term $k^{1/2}$ is asymptotically tight in the case when we allow multiple edges. However, it is unclear how large  $\phi_s(k)$ is. This question is treated next.

\begin{theorem}\label{thm:O(k34) bound}
	$\phi_s(k)=O(k^{3/4})$.
\end{theorem}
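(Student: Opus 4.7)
The plan is to exhibit, for each $k$, a simple graph $G$ with $cr(G)\geq k$ and $cr(CG)\leq k+O(k^{3/4})$. The witness will be $G=K_n$ for $n$ chosen so that $cr(K_n)\geq k$, and the bound follows by comparing $cr(K_{n+1})=cr(CK_n)$ with $cr(K_n)$.

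The core step is the lemma: for every $n\geq 2$, $cr(K_{n+1})\leq cr(K_n)+O(n^3)$. To prove it, fix an optimal drawing $D$ of $K_n$. Each crossing contributes $4$ to $\sum_{v}s_v$, so $\sum_v s_v=4\,cr(K_n)$, and averaging selects a vertex $u$ with $s_u\leq 4\,cr(K_n)/n$. The convex rectilinear drawing shows $cr(K_n)\leq\binom{n}{4}$, hence $s_u\leq n^3/6=O(n^3)$. Now construct a drawing of $CK_n$ from $D$: place the apex $a$ inside a face $F$ of $D$ incident with $u$, draw $au$ as a short arc inside $F$, and for every $v\neq u$ draw $av$ as a thin parallel copy of $uv$. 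Such a copy $av$ inherits exactly the crossings of $uv$ with edges not incident to $u$, giving in total at most $s_u$ new crossings. The only other cost is local near $u$: because $a$ lies in just one of the $n-1$ rotation sectors at $u$, each copy $av$ may have to cross up to $O(n)$ edges incident to $u$ in order to exit $F$ and reach the sector of $uv$; taking the shorter arc around $u$ sums $\sum_j\min(|j-i|,n-1-|j-i|)=O(n^2)$ over the $n-1$ copies. Different copies $av$ and $aw$ share the endpoint $a$ and can be drawn without crossing each other. Altogether the construction adds $s_u+O(n^2)=O(n^3)$ crossings, proving the lemma.

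The theorem follows by interpolation. Given $k\geq 1$, let $n$ be the least integer with $cr(K_n)\geq k$; such $n$ exists since $cr(K_n)\to\infty$. Taking $G=K_n$ yields $f_s(k)\leq cr(CK_n)=cr(K_{n+1})$. Since $cr(K_{n-1})<k$ by minimality of $n$,
\[\phi_s(k)=f_s(k)-k\leq cr(K_{n+1})-cr(K_{n-1})=\bigl[cr(K_{n+1})-cr(K_n)\bigr]+\bigl[cr(K_n)-cr(K_{n-1})\bigr]=O(n^3),\]
by two applications of the lemma. The Ajtai--Chv\'atal--Newborn--Szemer\'edi crossing inequality gives $cr(K_n)=\Omega(n^4)$, complementing the trivial $cr(K_n)\leq\binom{n}{4}=O(n^4)$, so $cr(K_n)=\Theta(n^4)$ and $n=\Theta(k^{1/4})$. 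Hence $n^3=\Theta(k^{3/4})$, which proves $\phi_s(k)=O(k^{3/4})$.

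The main obstacle is the parallel-copy construction at $u$: one must verify that each $av$ can be routed from its entry face $F$ to the correct outgoing sector at $u$ while picking up only $O(n)$ local crossings with the $u$-incident edges, and that distinct copies $av, aw$ can be drawn without mutual crossings near $a$. Neither issue is deep, but both require diagrammatic care and a consistent choice of rotation at $a$. Tracking the constants, this route gives $\phi_s(k)\leq Ck^{3/4}$ for a moderate $C$ governed by the crossing-lemma constant; closing the gap to the conjectured asymptotic $\sqrt{2}\,k^{3/4}$ would require replacing the simple averaging choice of $u$ by a more informed selection and using a sharper routing for the parallel copies.
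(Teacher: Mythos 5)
Your proposal is correct and follows essentially the same route as the paper: take $G=K_n$ with $n$ minimal such that $cr(K_n)\ge k$, bound $cr(K_{n+1})-cr(K_n)=O(n^3)$ by cloning a vertex of an optimal drawing of $K_n$, and use $cr(K_n)=\Theta(n^4)$ to conclude $n^3=O(k^{3/4})$. Your write-up is in fact a bit more careful than the paper's (the averaging choice of the cloned vertex, the explicit $O(n^2)$ accounting of local crossings near $u$, and the use of $cr(K_{n-1})<k$ in the interpolation step), and it substitutes the Ajtai--Chv\'atal--Newborn--Szemer\'edi inequality for the cited $\tfrac{3}{10}\binom{n}{4}$ lower bound, but these differences are cosmetic.
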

\begin{proof}
	Let us consider a positive integer $k$ and let $n$ be the smallest integer such that $\CR(K_n)\geq k$. Then $G=K_n$ has a crossing number at least $k$ and its cone is $K_{n+1}$.
	
	To find an upper bound for $\CR(K_{n+1})$ in terms of $\CR(K_n)$, start with a drawing of $K_n$ with $\CR(K_n)$ crossings. Then clone a vertex, that is, place a new vertex very close to an original vertex, and draw the new edges along the original edges. Each edge incident to the new vertex cross $O(n^2)$ edges, thus the obtained drawing has $\CR(K_n)+O(n^3)$ crossings. Therefore 
	
	$$\phi_s(k)\leq \CR(K_{n+1})-\CR(K_n)\leq O(n^3)\text{.} $$
	
	It is known \cite{KPS07} that
	$$
	%\begin{equation}
	\frac{3}{10}\binom{n}{4} \le \CR(K_n) \le \frac{3}{8}\binom{n}{4}.
	%\end{equation}
	$$
	(The constant $3/10$ in the lower bound has been recently improved to $0.32025$, see \cite{KPS07} for more information.) Then $\phi_s(k)=O(n^3)=O(k^{3/4})$.
\qed\end{proof}

\begin{comment}
\begin{theorem}\label{thm:o(k) bound}
$\phi_s(k) = o(k)$.
\end{theorem}

\begin{proof}
It is known \cite{KPS07} that
$$
%\begin{equation}
  \frac{3}{10}\binom{n}{4} \le \CR(K_n) \le \frac{3}{8}\binom{n}{4}.
%\end{equation}
$$
(The constant $3/10$ in the lower bound has been recently improved to $0.32025$, see \cite{KPS07} for more information.)
Thus $\CR(K_n)=\gamma_n\binom{n}{4}$ for some real number $\gamma_n$ between $\tfrac{3}{10}$ and $\tfrac{3}{8}$. By considering subdrawings of $n$ different copies of $K_{n-1}$ in an optimal drawing of $K_n$, one obtains that
$$ n\CR(K_{n-1}) \le (n-4)\CR(K_n),$$
which implies that the sequence $(\gamma_n)$ is non-decreasing and has a limit. This implies that $\gamma_n-\gamma_{n-1}=o(1)$.

Let us now consider a positive integer $k$ and let $n$ be the smallest integer such that $\CR(K_n)\ge k$. Then $G=K_n$ has crossing number at least $k$ and its cone is $K_{n+1}$. Therefore,
\begin{align*}
%\begin{equation}
   \phi_s(k)
   & \le \CR(K_{n+1}) - k \le \CR(K_{n+1}) - \CR(K_{n-1}) \\
   & = (\gamma_{n+1}-\gamma_{n-1})\binom{n}{4}(1+o(1)) \\
   & = o(1)\binom{n}{4} = o(k).
%\end{equation}
\end{align*}
Note that the asymptotics with respect to $k$ and $n$ is the same. This completes the proof.
\qed \end{proof}
\end{comment}

The \emph{Harary-Hill Conjecture} \cite{HH63}
states that
$$
   \CR(K_n) = \left\{
                \begin{array}{ll}
                  \tfrac{1}{64}n(n-2)^2(n-4), & \hbox{$n$ is even;} \\[2mm]
                  \tfrac{1}{64}(n-1)^2(n-3)^2, & \hbox{$n$ is odd.}
                \end{array}
              \right.
$$

\begin{proposition}\label{thm:O(k34) bound assuming HHC}
If the Harary-Hill conjecture holds, then
\begin{equation*}
   \phi_s(k) \le \sqrt{2}\, k^{3/4}(1+o(1)).\label{eq:under HHC assumption}
\end{equation*}
\end{proposition}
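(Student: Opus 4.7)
The plan is to combine the Harary--Hill closed forms with an interpolation between $K_{n-1}$ and $K_n$ so as to pin down the leading constant. Given $k$, take $n$ to be the smallest integer with $\CR(K_n)\ge k$. A direct computation from the Harary--Hill formula (splitting into the even and odd cases) gives
\[
\CR(K_n)=\tfrac{1}{64}n^4(1+o(1))\qquad\text{and}\qquad\CR(K_{n+1})-\CR(K_n)=\tfrac{1}{16}n^3(1+o(1)),
\]
so inverting $n=(64k)^{1/4}(1+o(1))$ converts the second identity into $\CR(K_{n+1})-\CR(K_n)=\sqrt2\,k^{3/4}(1+o(1))$. This is the target order of magnitude. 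The naive choice $G=K_n$ yields only $\phi_s(k)\le\CR(K_{n+1})-k$, matching the claim when $k=\CR(K_n)$ but losing up to a factor of $2$ when $k$ sits near the lower end of the interval $(\CR(K_{n-1}),\CR(K_n)]$.

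To absorb this slack, I would fix a vertex $v$, list the other vertices of $K_n$ as $u_1,\dots,u_{n-1}$, and let $G_j$ denote $K_{n-1}$ together with $v$ joined to $u_1,\dots,u_j$, so $\CR(G_0)=\CR(K_{n-1})<k$, $\CR(G_{n-1})=\CR(K_n)\ge k$, and $\CR(G_j)$ is monotone in $j$. Since adding a single edge to any drawing of $G_j$ introduces at most $|E(G_j)|=O(n^2)=o(k^{3/4})$ new crossings, the smallest $j^*$ with $\CR(G_{j^*})\ge k$ satisfies $\CR(G_{j^*})-k=o(k^{3/4})$. The cone $CG_{j^*}$ is precisely $K_{n+1}$ minus the $n-1-j^*$ edges $vu_{j^*+1},\dots,vu_{n-1}$; fixing an HHC-optimal drawing $D$ of $K_{n+1}$ and deleting these edges produces a drawing of $CG_{j^*}$ with $\CR(K_{n+1})-\chi$ crossings, where $\chi$ is the sum of the individual crossing counts of the deleted edges in $D$ (they share the endpoint $v$ and hence do not cross one another).

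The argument then reduces to proving
\[
\chi\ge\CR(K_n)-k-o(k^{3/4}),
\]
for then $\CR(CG_{j^*})-k\le\CR(K_{n+1})-\CR(K_n)+o(k^{3/4})=\sqrt2\,k^{3/4}(1+o(1))$, as required. This lower bound on $\chi$ is where I expect the main obstacle to lie. The plan is to choose the labelling $u_1,\dots,u_{n-1}$ so that the deleted edges are the $v$-edges with the largest number of crossings in $D$, and to exploit the symmetry of the cyclic HHC drawings: in those drawings the number of crossings of a $v$-edge depends only on the cyclic distance between its endpoints, so each $v$-edge contributes $(1+o(1))\,n^2/16$ crossings in both $D$ and $D|_{K_n}$ (since the difference, crossings with apex edges, is only $O(n)$ per edge). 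Near-optimality of the restriction $D|_{K_n}$ of the HHC-optimal drawing, combined with the constraint $\CR(G_{j^*})\ge k$ and the jump bound $\CR(G_{j^*})-k=o(k^{3/4})$, yields $\chi'\ge\CR(K_n)-k-o(k^{3/4})$ for the corresponding savings $\chi'$ in $K_n$, and $\chi\ge\chi'$ since the apex-edge contributions to $\chi$ are non-negative. Making the uniformity estimate and the near-optimality of the restricted drawing precise under HHC is the delicate technical point.
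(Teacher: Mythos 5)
Your setup is fine through the identity $\CR(K_{n+1})-\CR(K_n)=\tfrac{1}{16}n^3(1+o(1))=\sqrt2\,k^{3/4}(1+o(1))$ and the jump bound $\CR(G_{j^*})-k=O(n^2)=o(k^{3/4})$, but the step you yourself flag as delicate --- the lower bound $\chi\ge\CR(K_n)-k-o(k^{3/4})$ --- is a genuine gap, and the two ingredients you propose for it do not hold. The uniformity claim is false: in the cylindrical drawings that are optimal under Harary--Hill, the number of crossings carried by a $v$-edge is not $(1+o(1))n^2/16$ across the board; it ranges from $0$ (for the edge to a cyclically adjacent vertex on the same circle) up to $\Theta(n^2)$ (for nearly antipodal pairs), so deleting $n-1-j^*$ edges does not automatically delete $(n-1-j^*)n^2/16$ crossings. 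More fundamentally, every inequality your setup actually yields points the wrong way: from $\CR(G_{j^*})\ge k$ you get the \emph{upper} bound $\CR(K_n)-\CR(K_n-F)\le\CR(K_n)-k$ on the benefit of deleting $F=\{vu_i:i>j^*\}$, and from any drawing $D$ you get the \emph{upper} bound $\chi\le\CR(K_{n+1})-\CR(K_{n+1}-F)$, whereas you need a \emph{lower} bound on $\chi$. To manufacture one you would have to show both that $j^*$ is small and that the retained edges $vu_1,\dots,vu_{j^*}$ carry only $o(n^3)$ crossings in $D$; the first of these is a lower bound on the crossing number of $K_n$ minus a partial star, i.e.\ a statement about \emph{all} drawings of such graphs, which the Harary--Hill conjecture (an assertion about the value of $\CR(K_m)$ only) does not provide. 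There is no slack to absorb an error here: when $k=\CR(K_{n-1})+1$ you need $\chi$ to equal essentially all of $s_v\approx\tfrac{1}{16}n^3$, since $s_v\le\CR(K_{n+1})-\CR(K_n)$ for every vertex of an optimal drawing.

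The paper removes the factor-of-two loss without any structural input by topping up the crossing number with a separate gadget rather than with extra edges inside $K_n$: with $n$ as you chose it and $k_1=k-\CR(K_{n-1})$, pick $n_1$ with $\CR(K_{n_1-1})<k_1\le\CR(K_{n_1})$ and take $G=K_{n-1}\cup K_{n_1}$ (disjoint union). Then $\CR(G)=\CR(K_{n-1})+\CR(K_{n_1})\ge k$, the cone is $K_n$ and $K_{n_1+1}$ amalgamated at the apex, so $\CR(CG)=\CR(K_n)+\CR(K_{n_1+1})$ and $\phi_s(k)\le\bigl(\CR(K_n)-\CR(K_{n-1})\bigr)+\bigl(\CR(K_{n_1+1})-\CR(K_{n_1-1})\bigr)$. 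Since $k_1\le\CR(K_n)-\CR(K_{n-1})=O(n^3)$ forces $n_1=O(n^{3/4})$, the second term is $O(n^{9/4})=o(k^{3/4})$, and the Harary--Hill formulas give the first term as $\sqrt2\,k^{3/4}(1+o(1))$. If you want to pursue your interpolation, this is the shape of fix to aim for; as written, your argument reduces the proposition to unproved structural facts about optimal drawings of complete graphs that are of the same nature as the results the paper is trying to establish.
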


\begin{proof}
As in the proof of Theorem \ref{thm:O(k34) bound}, but with a slight twist for added precision, we take $n$ such that $\CR(K_{n-1}) < k \leq \CR(K_n)$.
We also take $n_1$ such that for $k_1 = k-\CR(K_{n-1})$ we have $\CR(K_{n_1-1}) < k_1 \le \CR(K_{n_1})$. Let $G = K_{n-1} \cup K_{n_1}$. Then $\CR(G) = \CR(K_{n-1}) + \CR(K_{n_1}) \ge k$ and $\CR(CG) = \CR(K_{n}) + \CR(K_{n_1+1})$. Therefore,
\begin{eqnarray*}
	\phi_s(k) & \leq & \CR(K_n)+\CR(K_{n_1+1})-\CR(K_{n-1})-\CR(K_{n_1})\\
	& \leq & \CR(K_n)-\CR(K_{n-1})+\CR(K_{n_1+1})-\CR(K_{n_1-1})\text{.}
\end{eqnarray*}

\begin{comment}
\begin{eqnarray*}
   \phi_s(k) &\leq& \CR(K_{n}) + \CR(K_{n_1+1}) - k \leq
   				\CR(K_n) - \CR(K_{n-1}) + \CR(K_{n_1+1}) - \CR(K_{n_1})\\
   &\leq& \CR(K_n) - \CR(K_{n-1}) + \CR(K_{n_1+1}) - \CR(K_{n_1-1}).
\end{eqnarray*}
\end{comment}
By inserting the values for the crossing number from the Harary-Hill Conjecture, we obtain (the calculation given is for odd $n$ and odd $n_1$, it is similar when $n$ or $n_1$ is even):
$$
  \CR(K_n) - \CR(K_{n-1}) =\tfrac{1}{64}((n-1)^2(n-3)^2-(n-1)(n-3)^2(n-5))
= \tfrac{1}{16}n^3(1+o(1))
$$

\noindent and
\begin{eqnarray*}
  \CR(K_{n_1+1}) - \CR(K_{n_1-1}) &=&
  \tfrac{1}{64}((n_1+1)(n_1-1)^2(n_1-3)\\
  & &-(n_1-1)(n_1-3)^2(n_1-5))\\
  &=& \tfrac{1}{8}n_1^3(1+o(1)).
\end{eqnarray*}
Noticing that $k=\tfrac{1}{64}n^4(1+o(1))$ and $k_1 = \tfrac{1}{64}n_1^4(1+o(1)) = O(n^3)$ because $k_1\leq cr(K_n)-cr(K_{n-1})$, we conclude that $n_1^3 = O(n^{9/4}) = o(k^{3/4})$ and henceforth
$$
   \phi_s(k) \le \tfrac{1}{16}n^3(1+o(1)) + \tfrac{1}{8}n_1^3(1+o(1))
             = \sqrt{2}\, k^{3/4}(1+o(1)).
$$
\qed \end{proof}

The above proof works even under a weaker hypothesis that $\CR(K_n) = \alpha n^4 + \beta n^3(1+o(1))$, where $\alpha$ and $\beta$ are constants. This would imply that $\phi_s(k) = O(k^{3/4})$. Our conjecture is that (\ref{eq:under HHC assumption}) gives the precise asymptotics.

\begin{conjecture}\label{conj:phi_s}
$\phi_s(k) = \sqrt{2}\, k^{3/4} (1+o(1))$.
\end{conjecture}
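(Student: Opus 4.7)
The plan is to attack the conjecture in two halves: the matching upper bound $\phi_s(k) \le \sqrt 2\, k^{3/4}(1+o(1))$ and the matching lower bound $\phi_s(k) \ge \sqrt 2\, k^{3/4}(1-o(1))$. The upper half is essentially Proposition~\ref{thm:O(k34) bound assuming HHC}, but only conditionally on the Harary--Hill Conjecture. To make it unconditional, it suffices to establish the precise asymptotic expansion $cr(K_n) = \tfrac{1}{64}n^4 - \tfrac{1}{8}n^3 + o(n^3)$; the calculation in the proof of Proposition~\ref{thm:O(k34) bound assuming HHC} works verbatim once this is known. Thus the upper direction reduces to a classical asymptotic question about the crossing number of complete graphs, which I would try to access by refining the cylindrical/two-page constructions of optimal drawings of $K_n$ and controlling the lower-order terms directly.

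The lower bound is the main conjectural content and is wide open; the estimate from Theorem~\ref{thm:lowerbound} gives only $\phi_s(k) \ge \sqrt{k/2}$, which is off by a factor of order $k^{1/4}$. I would proceed in two stages. The first is to establish the weaker bound $\phi_s(k) = \Omega(k^{3/4})$. The crucial new ingredient available for simple graphs is the Crossing Lemma: if $G$ is simple and $cr(G) \ge k$, then the number of vertices $n = |V(G)|$ is at least $\Omega(k^{1/4})$. Consequently the apex of $CG$ has $\Omega(k^{1/4})$ incident edges, almost all of which (unless absorbed into the apex face $F_a$) must cross edges of $G$. I would redo the page-splitting argument of Theorem~\ref{thm:lowerbound}, carrying this lower bound on $n$ through the depth calculation in the proof of Claim~\ref{lemma:estimatednumbercrossings}: the estimate $m \le s_a^2 + k/2$ there is tight, and when combined with $s_a \ge n - (\text{number of planar neighbours of the apex})$, it should force either many direct apex crossings or many indirect ones via the 1-to-2 page conversion, yielding $cr(CG) - cr(G) = \Omega(k^{3/4})$.

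The second stage, and the principal obstacle, is upgrading the constant to the sharp value $\sqrt 2$. The natural route is a stability-type statement: any simple graph $G$ achieving $cr(G) \ge k$ and $cr(CG) \le k + \sqrt 2\, k^{3/4}(1+o(1))$ should be structurally close to a disjoint union of near-optimally drawn complete graphs, at which point the computation reduces to the clique case handled in Proposition~\ref{thm:O(k34) bound assuming HHC} and delivers the constant $\sqrt 2$ exactly. The hard part is that stability-type theorems for the crossing number are currently unavailable; even the exact asymptotics of $cr(K_n)$ is not known. For this reason, a realistic intermediate target is the order-of-magnitude statement $\phi_s(k) = \Theta(k^{3/4})$, which already confirms the qualitative form of Conjecture~\ref{conj:phi_s} and isolates the question of the sharp constant as a problem about extremality in $cr(K_n)$.
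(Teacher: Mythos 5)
This statement is a \emph{conjecture}; the paper does not prove it and explicitly leaves it open. What the paper actually establishes is only $\phi_s(k)\ge\sqrt{k/2}$ (Theorem \ref{thm:lowerbound}), $\phi_s(k)=O(k^{3/4})$ unconditionally (Theorem \ref{thm:O(k34) bound}), and the sharp upper constant $\sqrt{2}$ only conditionally on the Harary--Hill Conjecture (Proposition \ref{thm:O(k34) bound assuming HHC}). Your proposal correctly identifies this landscape, but neither half of your plan closes the gap, and one step is concretely wrong.

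For the upper half: the constant $\sqrt{2}$ comes from $4\alpha^{1/4}$ where $\alpha=\lim_n cr(K_n)/n^4$, so you need to know $\alpha=1/64$ (plus control of the successive differences $cr(K_{n+1})-cr(K_{n-1})$, which does not follow from mere convergence of $cr(K_n)/\binom{n}{4}$). The upper-bound constructions already achieve $\tfrac{1}{64}n^4(1+o(1))$; what is missing is a matching \emph{lower} bound on $cr(K_n)$, i.e.\ the asymptotic Harary--Hill conjecture itself, currently known only with constant about $0.8594$ of the conjectured value. ``Refining the constructions and controlling lower-order terms'' cannot supply this. For the lower half, your stage one breaks at the step ``the apex has $\Omega(k^{1/4})$ incident edges, almost all of which must cross edges of $G$.'' An apex edge forces a crossing only when its endpoint is a non-planar neighbour, i.e.\ not on the face $F_a$; the quantity $s_a$ can be $0$ even when $n=\Omega(k^{1/4})$. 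The only leverage in that regime is the $1$-page-to-$2$-page max-cut saving of Corollary \ref{cor:1-2-page}, which yields $\sqrt{k/2}$, not $k^{3/4}$; ruling out a simple graph of crossing number $k$ whose near-optimal drawings have all vertices on one face and only $k+O(k^{3/4})$ crossings is precisely the open content (this is the ``antinomy'' the Summary describes, and the paper's Remark gets $\Omega(k^{3/4})$ only under the additional density hypothesis $m=\Omega(n^2)$). Your stage two (a stability theorem reducing to disjoint unions of cliques) you already acknowledge as unavailable. So the proposal is a reasonable research program but not a proof, and its one claimed-plausible step ($\phi_s(k)=\Omega(k^{3/4})$ via the Crossing Lemma) does not go through as described.
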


A reviewer noted that this asymptotic is matched when the graph we are considering is dense.
\begin{remark}
	Let $G$ be a graph with $n$ vertices, $m$ edges, $\CR(G)=k$ and such that $m\geq 4n$. If $m=\Omega (n^2)$, then $\CR(CG)\geq k+\Omega(k^{3/4}) $.
\end{remark}
The details will be provided in the full version.
%\begin{proof}
%	First we show that $\CR(CG)\geq k+4k/n$. Suppose not. In this case, for every $v\in V(G)$, the number $s_v$ of crossings involving edges incident to $v$ is less than $4k/n$, as else, when we remove $v$ and the edges from the apex of $CG$ to the non-neighbors of $v$,  we obtain a drawing of $G$ with less than $k$ crossings. Then $4k=\sum_{v\in V(G)}s_v<n(4k/n)$, a contradiction. Thus $\CR(CG)\geq k+4k/n$.
%	
%	Now, we use the Crossing Lemma \cite{ACNS} to find that $k=\Omega(m^3/n^2)=\Omega(n^4)$, and then obtain that $\CR(CG)\geq k+4k/n=k+\Omega(k^{4/3})$.
%\qed\end{proof}
%
%\noindent{\bf Question} Is it true that, for each non-negative integer $k$, if  $G$ is a graph with crossing number $k$, and with difference $cr(CG)-cr(G)$ as small as possible, then $G$ is crossing critical, meaning that any subgraph of $G$ has crossing number less than $k$?
%
%
\subsection*{Summary}

To put the results of this paper into context, let us overview some of the motivation and some of directions for future work. 
The starting point of this paper was an attempt to understand Albertson's conjecture. The results of the paper (and their proofs) show that the crossing number behavior when adding an apex vertex is intimately related to 1-page drawings, but the exact relationship is quite subtle.
There is some evidence that the minimal increase of the crossing number when an apex is added should be achieved with very dense graphs, close to the complete graphs. Our Conjecture \ref{conj:phi_s} entails this problem. Although very dense graphs have fewer vertices than sparser graphs with the same crossing number and thus need fewer connections to be made from the apex to their vertices, their near optimal drawings are far from 1-page drawings and therefore more crossings are needed.
The full understanding of this antinomy would shed new light on the Harary-Hill conjecture.

Finally, it is worth pointing out that neither exact nor approximation algorithm is known for computing the crossing number of graphs of bounded tree-width. Adding an apex to a graph increases the tree-width of the graph by 1, thus understanding the crossing number of the cone is an important special case that would need to be understood before devising an algorithm for general graphs of bounded tree-width.

\newpage

\appendix

\section{Appendix: Proof of part (3) of Theorem \ref{thm:cr(CG)geq3} }\label{app:1}
In this section we complete the proof of Theorem \ref{thm:cr(CG)geq3} by showing that if $cr(G)=k$ and  $k\geq 5$, then $cr(CG)\geq k+5$.

\begin{proof}

 Suppose $G$ is a graph with $cr(G)=k$. Let $\widehat{D}$ be  an optimal drawing of the cone $CG$, $D$ its restriction to $G$, and $F_{a}$ be the face of $D$ containing the apex $a$. The vertices  of $G$ incident to $F_{a}$ are the {\em planar neighbors} of $a$. If $u$ is a non-planar neighbor of $a$, then we let $s_{au}$ to be the number crossings involving  the edge $au$ in $\widehat{D}$.

%at most $cr(CG)-k$ non-planar neighbors. In the alternative, removing $a$ from $\widehat{D}$, yields a drawing of $G$ with less than $k$ crossings.

Assume that $k\geq 5$, and suppose $\widehat{D}$ has less than $k+5$ crossings. By part (2) of Theorem 4.1  we know that $cr(\widehat{D})=k+4$. Suppose that $cr(D)=k+t$.
Let $N$ denote the set of non-planar neighbors of $a$.
As we did before, we split into cases depending on the size of $N$.

If the apex $a$ has only planar neighbors, then, applying Corollary \ref{cor:1-2-page}, we can obtain a drawing with less than  $\tfrac{k+4}{2}<k$ crossings, a contradiction. Thus $|N|\geq 1$.

We need the following observation.

\begin{claim}\label{help}
	If $u\in N$,  then the following holds:
	\begin{itemize}
		\item[(i)] At most 
		$4-s_{au}$
		%$\left(\sum_{v\in N \setminus\{u\}}s_{av}\right)+t$ 
		crossings in $D$ involve edges incident to $u$.
		
		\item[(ii)] The number of crossing in which both edges involved are incident to some vertex in $N$ is at most
		$2|N|-\lceil s/2 \rceil$.
		%$ 2(|N|-1)+t/2\text{.} $ This inequality is strict if one of such crossings involves an edge joining two vertices in $N$.
		
	\end{itemize}
\end{claim}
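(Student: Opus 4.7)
The plan is to apply the same rerouting trick used in the proofs of parts (1) and (2) of Theorem~\ref{thm:cr(CG)geq3}: for a non-planar neighbor $u\in N$, place $u$ inside $F_a$ arbitrarily close to the apex position and, for each neighbor $v$ of $u$ in $G$, redraw the edge $uv$ so that it follows the arc $\widehat{D}[av]$ from $u$ to $v$. The useful bookkeeping identity to set up first is that, because $\widehat{D}$ is optimal, edges sharing the vertex $a$ do not cross, so every crossing of $\widehat{D}$ involving an edge at $a$ pairs it with an edge of $G$. Hence $cr(\widehat{D})=cr(D)+s_a$, and since $cr(\widehat{D})=k+4$ this yields the key identity $s_a+t=4$.

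For (i), let $X_u$ denote the set of crossings of $D$ involving an edge incident to $u$, and let $D'$ be the drawing of $G$ produced by the rerouting above. Three observations control $cr(D')$: crossings of $D$ outside $X_u$ occur between edges not incident to $u$ and so are preserved in $D'$; any two redrawn edges $uv$ and $uv'$ follow $\widehat{D}[av]$ and $\widehat{D}[av']$, which share the endpoint $a$ and therefore do not cross in $\widehat{D}$, so no new crossings appear between redrawn edges; and each redrawn edge $uv$ can be made to hug $\widehat{D}[av]$ closely enough that its crossings with other edges of $G$ are inherited from those of $\widehat{D}[av]$, hence number at most $s_{av}$. Combining these (and using $s_{av}=0$ for planar $v$),
\[
  cr(D')\le cr(D)-|X_u|+\sum_{v\in N_G(u)}s_{av}\le k+t-|X_u|+(s_a-s_{au}).
\]
Since $cr(D')\ge cr(G)=k$ and $s_a+t=4$, this forces $|X_u|\le 4-s_{au}$.

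Part (ii), with $s$ interpreted as $s_a$, then follows from (i) by double counting. Let $m$ count the crossings of $D$ both of whose edges are incident to some vertex of $N$, and let $m'$ count those having exactly one edge incident to $N$. Because no two edges sharing a vertex cross in $\widehat{D}$, each of the $m$ crossings has its two edges incident to two \emph{distinct} elements of $N$, contributing $2$ to $\sum_{u\in N}|X_u|$, while each $m'$-crossing contributes $1$. Summing the bound of (i) over $u\in N$,
\[
  2m+m'=\sum_{u\in N}|X_u|\le\sum_{u\in N}(4-s_{au})=4|N|-s_a,
\]
so $m\le 2|N|-s_a/2$, and integrality yields $m\le 2|N|-\lceil s_a/2\rceil$.

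The step that deserves the most care is the third observation in (i): justifying that the redrawn edges at $u$ pick up no unintended crossings. This reduces to a standard local perturbation—placing $u$ inside a sufficiently small disk around the apex position and routing each new edge $uv$ in a narrow tube parallel to $\widehat{D}[av]$—together with the observation that a crossing of $\widehat{D}[av]$ with another edge of $G$ incident to $u$ does not survive in $D'$, because that edge is itself rerouted along some $\widehat{D}[aw]$ and the two new arcs do not meet. Beyond this topological check, the argument is purely bookkeeping from the identity $s_a+t=4$ and the no-shared-vertex property of optimal crossings.
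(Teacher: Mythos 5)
Your proposal is correct and follows essentially the same route as the paper: part (i) is the same apex-rerouting argument (move $u$ to $\widehat{D}[a]$ and run its edges along the arcs $\widehat{D}[av]$, trading at least $|X_u|$ old crossings for at most $s_a-s_{au}$ new ones, with the identity $s_a+t=4$ doing the bookkeeping), and part (ii) is the same double count of incidences between vertices of $N$ and crossings, summing the bound $4-s_{au}$ over $N$ and dividing by $2$. The only cosmetic difference is that you phrase (i) as a direct inequality rather than a contradiction, and your ``$2m+m'=\sum_{u\in N}|X_u|$'' should strictly be ``$\le$'' (an edge with both ends in $N$ makes a crossing contribute more than its share), but this does not affect the conclusion since only $2m\le\sum_{u\in N}|X_u|$ is needed.
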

\begin{proof}
	(i) If  there are more than %$\sum_{v\in N \setminus\{u\}}s_{av}+t$ 
	$4-s_{au}$
	crossings involving edges at $u$, then we redraw  $u$ in the place of $\widehat{D}[a]$,  join $u$ to its neighbors using the corresponding edges from $a$ to $V(G)$. This is a drawing with less than
	%$$k+t-\left(\sum_{v\in N \setminus\{u\}}s_{av}+t\right)+\sum_{v\in N \setminus\{u\}}s_{av}=k$$
	$$k+4-s-(4-s_{au})+s-s_{au}=k$$
	crossings, a contradiction.
	
	(ii) From   (i), we know that for each $u\in N$, there are at most  %$\sum_{v\in N \setminus\{u\}}s_{av} +t$ 
	$4-s_{au}$
	crossings in $D$ involving  edges at $u$. Let us count the number of pairs $(u,x)$ where $u\in N$ and $x$ is a crossing involving an edge incident with $u$. By (i), the number of such pairs is at most
	
	\begin{eqnarray}
	\label{eq:claim4.1.3} 
	\sum_{u\in N}(4-s_{au}) & = & 4|N|-s\text{.}
	\end{eqnarray}
	This in particular implies (ii).
	\qed
\end{proof}

\begin{Case}
	The apex $a$ has exactly one planar neighbor.
\end{Case}
From Item (i) in the previous  Claim, we know that  there are at most $4-s\leq 3$ crossings involving edges incident with $u\in N$. So at least $k$ crossings  involve crossing pairs that are not incident to  $u$. We apply Corollary \ref{cor:1-2-page} to redraw some  crossing edges not incident with $u$  in $F_a$, and we are careful by  choosing our two pages  so that we draw one edge crossing an edge at $u$ (if such edge exists) in $F_a$ to remove an extra crossing. Then we
obtained a drawing with at most
$k+4-s-(\frac{k}{2}+\frac{1}{2})-1<k $  crossings.\\

\begin{Case}
	The apex $a$ has two non-planar neighbors.
\end{Case}

Let us define $t=4-s$ for brevity. Then   $0\leq t\leq 2$. Let $X$ be the set of crossings involving an edge not incident to a  vertex in $N$, and $E_X$ be the set of crossing edges not incident to any vertex in $N$. We claim that $|X|\leq t$.

This is easy to see when   $t=0$, as if $|X|\geq 1$, then  there is an edge in $E_X$ that we can  redraw  in $F_a$ to obtain a drawing with less than $k$ crossings.
Suppose that $t=1$. If $|X|\geq 2$, then,   either there is an edge   $e\in E_X$ including two crossings, or there is a pair of edges in $E_X$ that are not crossed. In the former case we redraw $e$ in $F_a$, in the latter we redraw the pair in $F_a$, to obtain a drawing with less than $k$ crossings.

Finally, suppose that $t=2$. Any  edge in $E_X$ is involved in at most 2 crossings, otherwise we could redraw it in $F_a$ to obtain a drawing with less than $k$ crossings.
If $|X|\geq 3$, then, either there is an edge $e\in E_X$ crossed  twice and an edge $f\in E_X$ not crossing $e$, or every edge in $E_X$ is crossed at most once. In the former case, we redraw $e$ and $f$ in $F_a$, in the latter, for each crossing in $X$ we pick an edge $E_X$ involved in the crossing and redraw it in $F_a$. In any case we obtain a drawing of $G$ with less than $k$ crossings. Therefore $|X|\leq 2$.

In any case we know that $|X|\leq t\leq 2$, and by Item (ii),  the number of crossings in $D$ is at most

$$ 2+t/2+|X|\leq 2+3t/2\leq 5\text{.} $$

Since $cr(G)\geq 5$, we have that $cr(D)=5$, $|X|=t=2$, and  $E_X\neq \emptyset$. However, if we redraw any edge from $E_X$ in $F_a$, we obtain a drawing of $G$ with less than $5$ crossings.

\begin{Case}
	The apex $a$ has three non-planar neighbors.
\end{Case}
In this case  $cr(D)$ is either $k$ or $k+1$, so $t=0$ or $t=1$. The argument given in the previous case shows that
there are at most $t$ crossings involving an edge not incident to a vertex in  $N$. Using Item (ii), we know that  $D$ has at most $4+t/2+|X|$ crossings, where $X$ is defined as in the previous case. Since $cr(D)\geq 5$, this shows that
$D$ has exactly 5 crossings, and that $|X|=1$. In particular $cr(D)=k$ and thus $t=0$, which contradicts that $|X|\leq t$.

\begin{Case}
	The apex has four non-planar neighbors.
\end{Case}

In this case $s=4$, $cr(D)=k$ and $s_{ay}=1$ for every $y\in N$. Let $N=\{u,v,w,x\}$. Note that each  crossing edge is incident to a vertex in $N$. By Item (i), there are at most 3 crossings involving edges incident to a fixed vertex in $N$, and by (ii), $cr(D)\leq 6$. Moreover,  the count (\ref{eq:claim4.1.3})  in the proof of (ii) shows that $cr(D)=5$ if there is an edge with both ends in $N$ that  is involved in a crossing. 

Let $H$ be the graph induced by $N$.
We will split into two cases depending on whether $D[H]$ is a crossing $K_4$ or not.

%Observe  that is not possible that the four ends of a crossing pair of edges are in $N$, as else, such crossings would contribute in $4$ to the sum, and this implies that $D$ has at most $\frac{3}{2}+1=4$ crossings.

\begin{subcase} $D[H]$ is not a crossing $K_4$.
\end{subcase}

If $H=K_4$, then $D[H]$ is a planar $K_4$. This implies that there is a 3-cycle composed of vertices in $N$, separating a fourth vertex in $N$ from $a$, and this contradicts that $s_{ay}=1$ for every $y\in N$. Therefore, there is a pair of vertices in $H$, say $u$ and $v$, with $uv\notin E(G)$.

If, for $y \in N$, $d_{H}(y)$ denotes the degree
of $y$ in $H$, then at most  $d_{H}(y)$ crossings involve edges at $a$.
Otherwise, by redrawing $y$ in $\widehat{D}[a]$; drawing the edges from $y$ to its neighbors in $H$ by using the respective edges from $a$; and, by drawing the remaining edges at $y$ in $F_{a}$ without creating new crossings, we obtain a drawing of $G$ with
less than $k$ crossings. Since $d_{H}(u), d_H(v)\leq 2$ and $d_{H}(w)$, $d_H(x)\leq 3$, $cr(D)$ is at most
$\left(\sum_{y\in N} d_H(y) \right)/2=5$. Because $cr(D)\geq 5$,  this implies that $cr(D)=5$, $d_H(u)=d_H(v)=2$ and that $d_H(w)=d_H(x)=3$. This also shows that for each $y\in N$, the number of crossings involving edges at $y$ is exactly $d_H(y)$. Also note that none of the edges in $H$ are crossed, as otherwise, at least three of the four vertices involved in some crossing belong to $N$, and a refined version of our previous counting would exhibit  that $D$ has at most 4  crossings.

Let $H'$ be the drawing induced by $N\cup\{a\}$. Our previous observations imply that the drawing of $D[H']$ is isomorphic to the drawing of the cone of a planarly drawn $K_4$ minus one edge, where the apex is drawn in the face bounded by the 4-cycle of $K_4-e$, and the edges incident to the apex connect directly to the boundary of the 4-cycle. Moreover, the only crossings of $H'$ in $\widehat{D}$, are those between the edges at $a$ and the boundary of $F_a$ in $D$. This restricted drawing of $H'$ implies that  the ends of a crossing pair of edges have exactly one element in $\{u,v\}$, exactly one element in $\{w,x\}$, and none of the two edges has both ends in $N$. However, this is not possible, as there are 4 crossings involving edges incident to one of $u$ or $v$, while there are 6 crossings involving edges incident to one of $w$ or $x$.

\begin{subcase}
	$D[H]$ is a $K_4$ with a crossing.
\end{subcase}

Suppose that $uv$ and $wx$ is the crossing pair in $D[H]$, and  that $\times$ is the crossing between $uv$ and $wx$.
Following the same argument given  in   the previous case, it is easy to see that  for every $y\in N$, there are exactly 2 crossings distinct from $\times$ involving edges at $y$;  $cr(D)=5$; if $H'$ is graph is induced by $N\cup\{a\}$, then its  drawing  $D[H']$ is isomorphic to the drawing of the cone of a crossing $K_4$, where the apex is drawn in the face bounded by the 4-cycle of the $K_4$, and the edges incident to the apex connect directly to the boundary of the 4-cycle; and,  the only crossings of $\widehat{D}$ in $H'$,  distinct from $\times$, are those between the edges at $a$ and the boundary of $F_a$ in $D$.

The restrictions on $H'$ show that  the ends of a crossing pair of edges distinct from $uv$ and $wx$, have exactly one element in $\{u,v\}$, exactly one element in $\{w,x\}$, and none of the two edges has both ends in $N$.

The boundary walk of $F_a$ contains a cycle $C$ that in  $\widehat{D}$ separates $a$ from $N$.
There are two internally disjoint subarcs $\alpha$ and $\beta$ of $D[C]$ connecting the crossings between $D[C]$ and each of $aw$ and $ax$. We label $\alpha$ and $\beta$ so that $\alpha$ includes the crossing between $D[C]$ and $au$,  and $\beta$ includes the crossing between $D[C]$ and $av$.
The restrictions imposed by the crossings in $H'$ imply that all the neighbors of $u$  not in $N$ are contained in $\alpha$, and likewise the neighbors of $v$ not in $N$ are contained in $\beta$.

We obtain a drawing of $G$ with 4 crossings as follows. Redraw $u$ in the place of $\widehat{D}[a]$; join $a$ to each of $w$ and $x$ using the corresponding edges from $a$ to each of $w$ and $x$. Draw the edges from $u$ to its  neighbors not in $N$ without creating new crossings. Now redraw $v$ near $u$ in the face bounded by $\beta$ and the two segments of the new  $uw$, $ux$ edges. Connect $v$ to each of  $w$ and $x$ by following arcs near the new  $uw$, $ux$ edges. Connect $v$ to the rest of its neighbors without creating new crossings. Since $cr(G)\geq 5$ and this  drawing has 4 crossings, this is a contradiction.

In any case we obtained a contradiction.   Thus $cr(CG)\geq k+5$ when $k\geq 5$.
\qed \end{proof}

\end{document}